\newtheorem{Lm}{Lemma}
\newtheorem{Thm}[Lm]{Theorem}
\newtheorem{Prop}[Lm]{Proposition}
\newtheorem{Cor}{Corollary}
\newtheorem{Add}{Addendum}
\theoremstyle{definition}
\newtheorem{Def}{Definition}
\newtheorem{Rem}[Lm]{Remark}
\newtheorem{Ex}{Example}
\def\bS{\mathbb{S}}
\def\E{\mathbb{E}}
\def\F{\mathbb{F}}
\def\R{\mathbb{R}}
\def\Z{\mathbb{Z}}
\def\N{\mathbb{N}}
\def\T{\mathbb{T}}
\def\P{\mathbb{P}}
\def\eps{\varepsilon}
\def\tJ{\widetilde{J}}
\def\mF{\mathcal{F}}
\def\mL{\mathcal{L}}
\def\mA{\mathcal{A}}
\def\mR{\mathcal{R}}
\def\tmA{\widetilde{\mathcal{A}}}
\def\tmR{\widetilde{\mathcal{R}}}
\def\Homeo{\mathop{\mathrm{Homeo}}}
\def\HS{\Homeo_+(\bS^1)}
\def\diam{\mathop{\mathrm{diam}}}
\def\FN{F_{N}}
\def\bdef{\begin{Def}}
\def\endef{\end{Def}}
\def\bthm{\begin{Thm}}
\def\ethm{\end{Thm}}
\def\bprop{\begin{Prop}}
\def\enprop{\end{Prop}}
\def\blm{\begin{Lm}}
\def\elm{\end{Lm}}
\def\bcor{\begin{Cor}}
\def\ecor{\end{Cor}}
\def\brm{\begin{Rem}}
\def\erm{\end{Rem}}
\def\bfig{\begin{picture}}
\def\efig{\end{picture}}
\def\beq{\begin{eqnarray}}
\def\eneq{\end{eqnarray}}
\def\beal{\begin{aligned}}
\def\enal{\end{aligned}}
\newcommand{\tf}{\tilde{f}}
\newcommand{\tg}{\tilde{g}}
\title{Translation numbers define generators of $F_k^+\to {\text{\rm Homeo}_+}(\bS^1)$}
\author[T. Golenishcheva--Kutuzova]{Tatiana Golenishcheva--Kutuzova}
\address{Moscow Center for Continuous Mathematical Education, Moscow, Russia}
\email{tania@mccme.ru}
\thanks{T. G.--K. was supported in part by RFBR grant 13-01-00969-a and joint RFBR/CNRS grant 10-01-93115-CNRS\_a.}
\author[A. Gorodetski]{Anton Gorodetski}
\address{Department of Mathematics, University of California, Irvine CA 92697, USA}
\email{asgor@math.uci.edu}
\thanks{A.\ G.\ was supported in part by NSF grant IIS-1018433.} % DMS--0901627 and
\author[V. Kleptsyn]{Victor Kleptsyn}
\address{CNRS, Institute of Mathematical Research of Rennes (IRMAR, UMR 6625 du CNRS), France}
\email{victor.kleptsyn@univ-rennes1.fr}
\thanks{V. K. was supported in part by RFBR grant 13-01-00969-a and joint RFBR/CNRS grant 10-01-93115-CNRS\_a.}
\author[D. Volk]{Denis Volk}
\address{KTH Matematik, Lindstedsv\"agen 25, SE-100 44 Stockholm Sweden}
\email{dvolk@kth.se}
\thanks{D.\ V.\ was supported in part by grants RFBR 12-01-31241-mol\_a, RFBR 13-01-00969-a, President's of Russia MK-7567.2013.1}
\date{\today}
\begin{document}

\begin{abstract}
We consider a minimal action of a finitely generated semigroup by homeomorphisms of a circle, and show that the collection of translation numbers of individual elements completely determines the set of generators (up to a common continuous change of coordinates). One of the main tools used in the proof is the synchronization properties of random dynamics of circle homeomorphisms: Antonov's theorem and its corollaries.
\end{abstract}

\maketitle

%\tableofcontents

\section{Introduction}

Groups of homeomorphisms (diffeomorphisms) of a circle  form a classical object of research (see~\cite{Ghys, Ghys-circle, Navas, FS}) and still attract lots of attention~\cite{Calegari-Walker, Navas-actions, Ghys-Sergiescu, DKN}. These groups are related to the questions in geometry~\cite{Matsumoto, W}, dynamics~\cite{GI1, GI2}, algebra~\cite{Mo}, and other fields.

    In the case of $\mathbb{Z}$ orientation-preserving action, i.e. of dynamics of iterates of one orientation-preserving circle homeomorphism, the most important dynamical invariant is the \emph{rotation number} introduced by Poincare~\cite{P}. For minimal actions, this number is a full invariant: any two such actions with the same rotation number are conjugate. Recall that due to the Denjoy Theorem~\cite{D} the irrationality of a rotation number of sufficiently smooth action implies minimality; moreover, even without the assumption of smoothness the action with an irrational rotation number is semi-conjugate to the corresponding rotation.
Rotation number is defined up to an integer, and sometimes it is convenient to fix a lift of the circle map. In this case we will consider a corresponding \emph{translation number} which is a real number uniquely determined by the lift. Specifically, if $f:S^1\to S^1$ is a homeomorphism of a circle, we will denote by $\tilde f$ a lift of $f$, and the translation number $\tau(\tf):= \lim_{n\to\infty} (\tf^n(x)-x)/n$ is known to exist and to be independent of~$x\in S^1$.

      Rotation (translation) numbers play an important role also in the case of the group action on a circle. In particular, for the case of minimal \emph{group} actions on the circle, a theorem by Ghys (see~\cite{Ghys},~\cite[Theorem 6.5]{Ghys-circle}) says that an action $\phi$ of a group $G$ is uniquely determined (up to a conjugacy) by its bounded Euler class $\phi^*(eu)\in H^2_b(G,Z)$. Applying this to a free group $G=F_k$, one can see that this class corresponds to the translation numbers of all the compositions, defined up to a simultaneous change of lifts. This result was further used by Matsumoto to show that for a surface group its smooth action with the highest possible Euler class is unique up to the conjugacy, see~\cite{Matsumoto}.

 Here we relax the conditions of Ghys' Theorem and show that a similar statement hold for a finitely generated semigroup action as well. In other words, it is enough to know the translation numbers of all {\it positive} finite compositions of generators to determine generators (up to a continuous change of coordinates) of the semigroup.

\begin{Thm}\label{t:main} Let $\{f_1, f_2, \ldots, f_k\}$ be orientation preserving homeomorphisms of a circle such that the semigroup $\langle f_1, f_2, \ldots, f_k\rangle_{+}$ generated by them acts minimally on $S^1$, and the same holds for the simigroup generated by their inverses. The collection $\{f_1, \ldots, f_k\}$ with these properties is determined uniquely, up to a simultaneous change of coordinates, by the translation numbers of  positive compositions of their lifts.
\end{Thm}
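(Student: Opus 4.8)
The plan is to reduce to the following situation and then build an explicit conjugacy out of the synchronizing random dynamics: we are given two collections $\{f_1,\dots,f_k\}$ and $\{g_1,\dots,g_k\}$, each satisfying the hypotheses, whose positive compositions have equal translation numbers (for a fixed compatible choice of lifts), and we must produce $h\in\HS$ with $h\circ f_i=g_i\circ h$ for all $i$; write $f_w:=f_{i_1}\circ\cdots\circ f_{i_n}$ for a finite word $w=i_1\cdots i_n$, and similarly $g_w$. If the semigroup action is topologically conjugate to an action by rotations, then each $f_i$ is conjugate to a rotation, the collection is determined up to conjugacy by the angles $\operatorname{rot}(f_i)\in\R/\Z$, these are read off from the translation numbers, and likewise for $\{g_i\}$, so $h$ exists. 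Otherwise, by Antonov's theorem and its corollaries — here using that both the semigroup and the semigroup of inverses act minimally — the random walk on the semigroup \emph{synchronizes}, and this is the case I would analyze.

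Fix probabilities $p_1,\dots,p_k>0$ summing to $1$, let $\Omega=\{1,\dots,k\}^{\N}$ carry the Bernoulli measure $\P$ and the shift $\sigma$, and drive both systems by the same $\omega=(\omega_1,\omega_2,\dots)$. Synchronization gives, for $\P$-a.e.\ $\omega$, a limit $z_f(\omega):=\lim_{n\to\infty}f_{\omega_1}\circ\cdots\circ f_{\omega_n}(x)$ not depending on $x$ (for Lebesgue-a.e.\ $x$); the map $z_f\colon\Omega\to S^1$ satisfies the equivariance $z_f(\omega)=f_{\omega_1}\bigl(z_f(\sigma\omega)\bigr)$, and $\mu_f:=(z_f)_*\P$ is the unique stationary measure of the walk, atomless and with $\operatorname{supp}\mu_f=S^1$ (the unique minimal set). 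The same construction produces $z_g$, with $z_g(\omega)=g_{\omega_1}(z_g(\sigma\omega))$, and $\mu_g$.

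It then suffices to produce $h\in\HS$ with $h\circ z_f=z_g$ $\P$-a.e.: for a.e.\ $\omega$ this gives $h\bigl(f_{\omega_1}(z_f(\sigma\omega))\bigr)=h(z_f(\omega))=z_g(\omega)=g_{\omega_1}(z_g(\sigma\omega))=g_{\omega_1}\bigl(h(z_f(\sigma\omega))\bigr)$, so $h\circ f_i$ and $g_i\circ h$ agree on $\{z_f(\sigma\omega):\omega_1=i\}$, which is dense since $\mu_f$ has full support, hence they agree everywhere. In other words, I want the measure $\nu:=(z_f,z_g)_*\P$ on $S^1\times S^1$ to sit on the graph of a homeomorphism (equivalently, on a closed set projecting homeomorphically onto each factor, which is then automatically $(f_i\times g_i)$-invariant). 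Because $\mu_f,\mu_g$ are atomless and fully supported, this follows from:
\[
(\star)\qquad\text{for }\P^{\otimes3}\text{-a.e. }(\omega,\omega',\omega''):\ \bigl(z_f(\omega),z_f(\omega'),z_f(\omega'')\bigr)\ \text{and}\ \bigl(z_g(\omega),z_g(\omega'),z_g(\omega'')\bigr)\ \text{have the same cyclic order.}
\]

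The crux is $(\star)$, and this is where the translation numbers must be used, via the Poincar\'e principle that the cyclic order of an orbit of one circle homeomorphism is governed by its rotation number. For a.e.\ $\omega$ and any fixed $x$, $z_f(\omega)$ is the limit of the orbit points $f_{\omega_1\cdots\omega_n}(x)$, and ``positive cyclic order'' is an open condition, so $(\star)$ reduces to matching the cyclic order of finite configurations of such orbit points between the two systems. Passing to periodic driving sequences $\overline w$ — a $\P$-null but dense set on which, in the synchronizing regime, $z_f(\overline w)$ is the attracting fixed point of $f_w$ — the position of $z_f(\overline w)$ is constrained by $\operatorname{rot}(f_w)$ together with the ambient configuration, and the relations among $\operatorname{rot}(f_{uw}),\operatorname{rot}(f_{uwv}),\dots$ pin down recursively the mutual cyclic positions of $z_f(\overline u),z_f(\overline v),z_f(\overline{uv}),\dots$; this is exactly the combinatorics of rotation numbers of positive words (Calegari--Walker). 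Since all these numbers agree for the $f$- and $g$-systems, the cyclic orders agree, and a density/ergodicity argument transfers this from periodic sequences to $\P^{\otimes3}$-a.e.\ triple. I expect this last step to be the main obstacle: unlike Ghys' theorem, where the bounded Euler cocycle directly records the cyclic order of a \emph{group} orbit, here only positive words are available, so the relevant order must be reconstructed through the synchronizing random dynamics, and one must cope with (i) the periodic sequences carrying the cleanest information forming a null set, (ii) the gap between attracting fixed points (rational, in particular zero, rotation number) and irrational rotation numbers, and (iii) recovering \emph{relative} cyclic positions of orbit points from rotation numbers of individual positive compositions. The remaining steps are routine once synchronization is in hand.
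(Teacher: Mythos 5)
Your overall plan—drive both systems by the same Bernoulli sequence, build a ``coding'' map $z_f\colon\Omega\to\bS^1$ whose image has the stationary measure as distribution, reduce the conjugacy to a cyclic-order comparison $(\star)$, and extract that order from translation numbers—is the right high-level framework, and your reduction of the conjugacy to $(\star)$ (using that $\mu_f,\mu_g$ are atomless and of full support) is sound. However, there are two genuine gaps, one small and one central.

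First, the dichotomy ``conjugate to rotations, or else the walk synchronizes'' is incomplete. Antonov's theorem has a \emph{third} case: the $f_i$ all commute with a finite-order homeomorphism $\varphi$, and synchronization holds only after passing to the quotient $\bS^1/\varphi$. In that case $d(F[n,\omega](x),F[n,\omega](\varphi(x)))$ does not go to zero, $z_f$ as you define it does not exist, and your argument does not run as written. This case is not hard—one shows the quotient systems are conjugate and lifts—but you cannot skip it, and one also needs a way to detect from the translation numbers which of the three regimes one is in (the paper's Addendum on asymptotic distribution of rotation numbers does exactly this; you assume the regime is known).

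Second, and more seriously, the crux $(\star)$ is not actually proven, and your own worry (ii) is precisely why the sketch cannot be completed as stated. In the synchronizing case, a random long positive composition has rotation number $0$ with probability tending to $1$; passing to periodic driving sequences, the individual numbers $\operatorname{rot}(f_w)$ are therefore almost all zero and carry essentially no information about where the attracting fixed point of $f_w$ sits on the circle. So ``the position of $z_f(\overline w)$ is constrained by $\operatorname{rot}(f_w)$'' is not usable: the positional information is \emph{not} in the rotation numbers of individual positive words, but in the failure of additivity of translation numbers under composition. The paper's key device, which your sketch lacks, is the quantity
\[
c(f,g):=\tau(\tilde f\circ\tilde g)-\tau(\tilde f)-\tau(\tilde g)\in\{-1,0,1\}
\]
together with the observation (Lemma on the four arcs $\mA_f,\mR_f,\mA_g,\mR_g$) that for two maps close to Morse--Smale, $c(f,g)$ records the cyclic order of their attractors and repellers. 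Combined with the random-repeller statement (the paper's Addendum) and an averaging over a random suffix $h$, this yields $\mu_-$-measures of arcs purely in terms of translation numbers, which is what pins down the cyclic order. Your proposal gestures at ``Calegari--Walker combinatorics'' and a ``recursive'' reconstruction, but without isolating this cocycle defect there is no concrete mechanism; and your proposed transfer from the null set of periodic sequences to $\P^{\otimes 3}$-a.e.\ triples is an additional step that is not automatic and is avoided in the paper by working with $(a,r)$-families that need not come from periodic words.

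In short: the framework is compatible with the paper's, but the heart of the theorem—how translation numbers encode cyclic order despite rotation numbers of positive words being generically zero—is exactly the step you flag as ``the main obstacle'' and do not carry out; and the factorizable case is omitted from your trichotomy.
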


The minimality assumption cannot be omitted as the following example shows.

\begin{Ex}
Let $I_a, I_r\subset S^1$ be two disjoint closed intervals on a circle, and $f_i, i=1, 2, \ldots, k, $ be orientation preserving homeomorphisms of a circle exhibiting exactly two fixed points, one attracting in $I_a$, and another repelling in $I_r$. Then any positive composition must have zero rotation number and, moreover, one can choose lifts $\tilde{f}_1, \ldots, \tilde{f}_k$ in such a way that any positive composition of the lifts has zero translation number. On the other hand, a simultaneous continuous change of coordinates on $S^1$ must preserve the order of attractors (repellers) of $f_1, \ldots, f_k$. This gives combinatorial invariants for  $\{f_1, \ldots, f_k\}$ that cannot be detected by translation numbers.
\end{Ex}

 The following example shows that one cannot consider the rotation numbers instead of the translation numbers in Theorem~\ref{t:main}.

\begin{Ex}
The group $PSL_2(\Z)$ acts minimally on the circle in the standard way: by projectivization of the corresponding linear maps. This group has a subgroup $\Gamma_2$, generated by $\left[\left(\begin{smallmatrix} 1 & 2 \\ 0 & 1 \end{smallmatrix} \right) \right]$ and $\left[\left(\begin{smallmatrix} 1 & 0 \\ 2 & 1 \end{smallmatrix} \right) \right]$. The subgroup $\Gamma_2$ is a free group, as it can be seen from the ping-pong lemma (see, e.g.,~\cite{Ghys-circle},~\cite[Sec.~5.2]{DKN}). Also, $\Gamma_2$ is an index~$6$ subgroup in $PSL_2(\Z)$, as it is the kernel of the reduction mod~$2$ map from $PSL_2(\Z)$ to $PSL_2(\F_2)=Sym_3$. Hence, as a finite index subgroup in a group acting minimally, $\Gamma_2$ also acts minimally on the circle.

On the other hand, all the elements of $\Gamma_2$ are either hyperbolic, or parabolic: elliptic elements of $PSL_2(\Z)$ are of finite order, while in a free group there are no finite order elements. Hence, all these elements have zero rotation number. Thus, taking any set of elements that generate $\Gamma_2$ as a semigroup, we obtain an action from $\mF$ with all the rotation numbers that are equal to zero. Though, it is clear that such sets are not necessarily conjugate (for instance, the identity may belong to one of them and not to the other).
\end{Ex}

One of the motivations for Theorem~\ref{t:main} comes from recent results by Calegari---Walker~\cite{Calegari-Walker}, where the following question was considered. Given rotation numbers of generators, what can be said about possible rotation number of a specific element of a group (or a semigroup)? As due to Ghys' theorem the translation numbers of all the elements of the group define the action up to a conjugacy, Calegari and Walker have argued that the set of such families of translation numbers of compositions can be thought as a ``character variety'' for the associated non-linear action on the circle. In their work, the main focus was made (and the strongest results were obtained) in the case of positive products of generators.

Another source of motivation comes from the question on dynamics of minimal iterated function systems (that can be modeled using step skew products). In the case of actions on a circle some examples were constructed in~\cite{GI1, GI2}, and later different kinds of generalizations (in particular, to the case of a higher dimensional phase space) appeared in~\cite{GIKN, GHS, GS, HN, V}. In this case it is natural to consider actions of semigroups instead of group actions.

Though the result of Theorem~\ref{t:main} is purely deterministic, one of the key ingredients of the proof is the application of results describing \emph{random} dynamics (together with the arguments ``if an event has a positive probability, it is non-empty''). The setting of random dynamics is the following: assume that we are given a probability measure $\nu$ on the finite set of indices $\{1,\dots,k\}$. To this measure, one can associate a sequence of random iterations $F[n,\omega]=f_{\omega_n}\circ\dots \circ f_{\omega_1}$, where $\omega\in\{1,\dots,k\}^{\N}$ is chosen with respect to the Bernoulli measure $\overline{\nu}:=\nu^{\N}$ and hence, the maps applied on different steps are chosen independently. Also, it is convenient to consider the transformation~$T_\nu$, acting on the space of measures on the circle: $T_\nu (\mu)=\sum_i \nu(i) (f_i)_* \mu$.

For the random dynamics on the circle, the following fundamental result establishes the phenomenon of synchronization under random dynamics:
\begin{Thm}[Antonov, \cite{A}]\label{t:contraction}
Let maps $f_1,\dots,f_k\in \HS$  satisfy the assumptions of Theorem~\ref{t:main}, and let $\nu$ be measure supported on~$\{1,\dots,k\}$. Then exactly one of the following statements holds:
\begin{enumerate}
\item There exists a common invariant measure of all the $f_i$'s, and all these maps are simultaneously conjugate to rotations;
\item\label{i:contr} For any two points $x,y\in\bS^1$ the distance between their random images $F[n,\omega](x)$ and $F[n,\omega](y)$ tends to zero almost surely;
\item There exists $l>1$ and an order $l$ orientation-preserving homeomorphism $\varphi$, such that it commutes with all the $f_i$'s, and after passing to the quotient circle $\bS^1/(\varphi^i(x)\sim \varphi^j(x))$ for the new maps $g_i$ the conclusions of case~(\ref{i:contr}) are satisfied.
\end{enumerate}
In the case of type~(\ref{i:contr}) dynamics, for any point $p\in \bS^1$ the distribution of its random images after $n$ iterations converges to the unique stationary measure~$\mu_+$:
$$
T_\nu^n \delta_p \rightarrow \mu_+ \text{ as } n\to \infty,
$$
where $\delta_p$ stays for the Dirac measure concentrated at~$p$, and $\mu_+=T_\nu \mu_+$.
\end{Thm}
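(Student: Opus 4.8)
The strategy is to reduce the whole statement to the behaviour of a measure–valued martingale attached to a stationary measure, and to read the trichotomy off a zero–one law. \textbf{Step 1 (a normalized stationary measure).} By weak-$*$ compactness of the probability measures on $\bS^1$ and Krylov–Bogolyubov averaging of $T_\nu$, there is $\mu$ with $T_\nu\mu=\mu$. First I would check $\mu$ is non-atomic: the finitely many atoms of maximal $\mu$-mass form, by stationarity, a set invariant under each $f_i^{-1}$ with $\nu(i)>0$, contradicting minimality of $\langle f_i^{-1}\rangle_+$. Likewise $\operatorname{supp}\mu$ is a nonempty closed $\langle f_i\rangle_+$-invariant set, hence all of $\bS^1$. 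Then the cumulative distribution function of $\mu$ is a homeomorphism $h\in\HS$, and replacing each $f_i$ by $hf_ih^{-1}$ — a legitimate common change of coordinates — we may assume that Lebesgue measure is stationary, i.e.\ $\sum_i\nu(i)\,|f_i^{-1}(I)|=|I|$ for every arc $I$.

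\textbf{Step 2 (martingale and zero–one law).} Put $\hat F_n:=f_{\omega_1}\circ\cdots\circ f_{\omega_n}$, so $\hat F_{n+1}=\hat F_n\circ f_{\omega_{n+1}}$. Stationarity of $\mathrm{Leb}$ gives $\mathbb E[(\hat F_{n+1})_*\mathrm{Leb}\mid\mathcal F_n]=(\hat F_n)_*(T_\nu\mathrm{Leb})=(\hat F_n)_*\mathrm{Leb}$, so $\mu_n:=(\hat F_n)_*\mathrm{Leb}$ is a bounded martingale in the compact metrizable weak-$*$ topology; hence $\mu_n\to\mu_\infty(\omega)$ a.s., with $\mathbb E\,\mu_\infty=\mathrm{Leb}$ and $\mathbb E[\mu_\infty\mid\mathcal F_n]=\mu_n$. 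From $\hat F_n(\omega)=f_{\omega_1}\circ\hat F_{n-1}(\sigma\omega)$ one gets the equivariance $\mu_\infty(\omega)=(f_{\omega_1})_*\mu_\infty(\sigma\omega)$. Since $f_{\omega_1}$ is a homeomorphism, every ``combinatorial type'' of $\mu_\infty(\omega)$ — having an atom; the number and mass of the heaviest atoms; having a non-atomic part — is $\sigma$-invariant, hence a.s.\ constant by ergodicity of the Bernoulli shift.

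\textbf{Step 3 (the three cases).} If $\mu_\infty\equiv\mathrm{Leb}$, then $\mu_1=\mathbb E[\mu_\infty\mid\mathcal F_1]=\mathrm{Leb}$, i.e.\ $(f_i)_*\mathrm{Leb}=\mathrm{Leb}$ for each $i$ with $\nu(i)>0$; an orientation-preserving circle homeomorphism preserving $\mathrm{Leb}$ is a rotation — case (1). If $\mu_\infty=\delta_{c(\omega)}$ a.s., then (Step 4) $F[n,\omega]x$ and $F[n,\omega]y$ coalesce a.s.\ for every $x,y$ — case (2); moreover $T_\nu^n\delta_p=\mathbb E_\omega\,\delta_{F[n,\omega]p}$, so coupling through the same $\omega$ gives $\|T_\nu^n\delta_p-T_\nu^n\delta_q\|_W\le\mathbb E\,d(F[n,\omega]p,F[n,\omega]q)\to0$, whence, using $\mu=\int T_\nu^n\delta_q\,d\mu(q)$ and transporting back by $h^{-1}$, $T_\nu^n\delta_p\to\mu_+$ (the unique stationary measure) independently of $p$. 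Otherwise $\mu_\infty$ is a.s.\ uniform on an equivariant finite set $A(\omega)$ of $l>1$ points, out of which one builds an order-$l$ homeomorphism $\varphi$ commuting with all $f_i$ whose orbits are the $A(\omega)$; the quotient maps $g_i$ on $\bS^1/\varphi$ then fall in case (2) — case (3). The three cases are mutually exclusive, corresponding to $\mu_\infty$ non-atomic, a single Dirac mass, and $1<l<\infty$ equal atoms.

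\textbf{Step 4 (the main obstacle).} The preceding reductions are soft; the genuine content, and where both minimality hypotheses are indispensable, is the structure of $\mu_\infty$. (a) If $\mu_\infty$ is non-atomic then in fact $\mu_\infty\equiv\mathrm{Leb}$ (there is no intermediate behaviour): its support, being perfect, cannot be a Cantor set invariant in the required sense — a Denjoy-type ``largest gap'' argument using both minimality assumptions — so $\operatorname{supp}\mu_\infty=\bS^1$, $h_{\mu_\infty(\omega)}$ is a random conjugacy of the $f_i$ to rotations, and an invariance-principle/rigidity argument forces it to be non-random; ruling out a non-atomic remainder or unequal atom masses in the atomic case is of the same kind. (b) Passing from the backward concentration $\mu_\infty=\delta_c$ to the forward a.s.\ coalescence of \emph{every} pair is the classical, nontrivial ``forward/backward dictionary'' for random circle dynamics: one shows the forward two-point chain on $(\bS^1)^2\setminus\Delta$ carries no stationary measure, hence is absorbed at the diagonal a.s. (c) Manufacturing the commuting finite-order $\varphi$ from the equivariant $l$-point family $A(\omega)$. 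I expect (a), together with the construction of $\varphi$ in (c), to be the hardest part; granting these structural facts, everything in Step 3 is bookkeeping.
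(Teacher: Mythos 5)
Your Steps 1--3 are a clean and legitimate reduction (the backward compositions $\hat F_n=f_{\omega_1}\circ\cdots\circ f_{\omega_n}$ do give a measure-valued martingale $(\hat F_n)_*\mathrm{Leb}$, and the equivariance $\mu_\infty(\omega)=(f_{\omega_1})_*\mu_\infty(\sigma\omega)$ plus ergodicity of the shift is correct), but the proposal stops exactly where the theorem begins. Everything you defer to Step 4 is the actual content of Antonov's theorem, and none of it is proved. Concretely: (a) the classification of $\mu_\infty$ --- that almost surely it is either (after your normalization) Lebesgue, or a single Dirac mass, or uniformly distributed on $l$ equally weighted points forming the orbit of one finite-order homeomorphism --- is asserted via catchwords (``Denjoy-type largest gap'', ``invariance-principle/rigidity''), with no argument excluding, say, a genuinely random non-atomic $\mu_\infty\neq\mathrm{Leb}$ with full support, or a mixture of atomic and continuous parts; (b) the passage from backward concentration $\mu_\infty=\delta_{c(\omega)}$ to the \emph{almost sure} forward coalescence of every pair $x,y$ (which is literally what case (2) asserts) is a real gap: you neither show that the two-point chain on $(\bS^1)^2\setminus\Delta$ has no stationary measure, nor is the implication ``no off-diagonal stationary measure $\Rightarrow$ a.s.\ absorption at the diagonal'' automatic --- absence of off-diagonal stationary measures only yields Ces\`aro/in-probability statements, and upgrading to a.s.\ convergence needs a martingale or local-contraction input that you never supply; (c) the construction of a single non-random order-$l$ homeomorphism $\varphi$ commuting with all the $f_i$ out of the random sets $A(\omega)$ is not given. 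Since case (2)'s convergence $T_\nu^n\delta_p\to\mu_+$ and the whole trichotomy rest on (a)--(c), the proposal is a plan rather than a proof.

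It is worth noting that the paper's proof is organized precisely so that these three difficulties are resolved simultaneously by one object, and by a martingale dual to yours: it fixes a stationary measure $\mu_-$ for the \emph{inverse} maps and observes that $\xi_n=\mu_-([F[n,\omega](x),F[n,\omega](y)])$, built on the \emph{forward} compositions, is a bounded real-valued martingale, hence converges a.s.\ (this is what delivers the almost-sure forward statement you are missing in (b)). Then the limit set of $(F[n,\omega](x),F[n,\omega](y))$ in $\T^2$ is invariant under all $f_i\times f_i$ and, by minimality (which also forces $\mu_-$ to be non-atomic with full support), projects onto the whole circle; this identifies the limit set with the graph of a ``$\mu_-$-rotation'' by $t(\omega)$ commuting with all the $f_i$. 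The trichotomy is then read off the subgroup of $\R/\Z$ generated by the essential values of $t$: infinite gives a common invariant measure, trivial gives case (2), and finite of order $l$ hands you the commuting finite-order $\varphi$ at no extra cost --- i.e.\ your items (a) and (c) come for free in that framework. If you want to keep your Furstenberg-style backward-martingale route, you must supply genuine proofs of (a), (b), (c); as written they are named, not established.
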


In fact, we need a more detailed description of the random dynamics. In particular, we need the following

\begin{Add}\label{p:12}
In the setting of Theorem~\ref{t:contraction}, assume that the case~(\ref{i:contr}) holds. Then almost surely there exists a point $r=r(\omega)\in\bS^1$ such that for any its neighborhood $V$ the images of its complement are contracted:
$$
\diam F[n,\omega](S^1\backslash V)\to 0\quad \text{as } n\to \infty.
$$
Moreover, $r$ is distributed w.r.t. measure $\mu_-$, which is the unique stationary measure for the dynamics of inverse maps, that is, $\mu_-=\sum_j \nu(j) (f_j^{-1})_* \mu_-$.
\end{Add}

We also notice that the rotation numbers allow us to distinguish between the three cases in Theorem~\ref{t:contraction}. Namely, we have the following

\begin{Add}\label{p:detector}
Given $N\in \mathbb{N}$, consider distribution of rotation numbers of random compositions of $f_1, \ldots, f_k$ with random length $n\in \{1, \ldots, N\}$, $\mathbb{P}(n=i)=\frac{1}{N}, i=1, \ldots, N$. We have the following asymptotics as $N\to \infty$:
\begin{enumerate}
\item\label{i:det1} In the case of a common invariant measure (i.e. in the case (1) in Theorem~\ref{t:contraction}), the rotation numbers of compositions are asymptotically equidistributed on the circle $S^1=\R/\Z$;
\item\label{i:det2} In the case when there is no common invariant measure and dynamics is  non-factorizable (the case (2) in Theorem \ref{t:contraction}), the probability that a rotation number is equal to zero tends to~$1$ as $N\to\infty$;
\item\label{i:det3} In the case of $l$-leaves factorizable dynamics (the case (3) in Theorem \ref{t:contraction}), the rotation number is asymptotically equidistributed on the set $\{0,1/l,\dots,(l-1)/l\}$ as $N\to\infty$.
\end{enumerate}
\end{Add}

The proof of both Addendum~\ref{p:12} and Addendum~\ref{p:detector} (as well as a brief reminder of the proof of Antonov's Theorem~\ref{t:contraction}% in the terms of the current paper
)  is provided in Section~\ref{s:random}.

  Notice that existence of a common invariant measure for several homeomorphisms of a circle, as well as existence of a factorization (i.e. of a homeomorphism of finite order that commutes with all the homeomorphisms from the initial collection) is certainly a degeneracy, and therefore Theorem 2 claims, in particular, that generically, in the case of minimal dynamics, for any two initial conditions under the random application of the homeomorphisms $\{f_1, f_2, \ldots, f_k\}$ their images will eventually become indistinguishable. This is a typical case of a {\it non-linear synchronization}. 
  
  The history of synchronization goes back to the 17th century when Christiaan Huygens observed a phenomenon of synchronization
of pendulum clocks~\cite{Hu}. Currently synchronization is a well established notion in physics~\cite{ABVRS,PRK}.  Theorem~\ref{t:contraction} (stated here in a slightly modified way) was suggested as a dynamical model of synchronization by Antonov~\cite{A} and later was independently re-discovered with a different proof in~\cite{KN}.  Similar features of the non-linear random walks on a line were studied in~\cite{DKNP}.  It is interesting to compare these results with ``synchronization'' of orbits on the same central leaf in some partially hyperbolic systems (\cite{RW, SW}, see also~\cite{H2}) and skew products~\cite{KV1, KV2, H1}. Recently some results of this kind were obtained also in higher dimensions~\cite{H3,V}.

\subsection*{Acknowledgments}
All four authors are former students of Professor Yulij Ilyashenko, whose influence and support they would not be able to overestimate. This work is based on ideas and discussions that they had when all of them took part in the work of the seminar in Dynamical Systems under the guidance of Professor Ilyashenko in Moscow State University.

The authors are grateful to \'E. Ghys and D. Calegari for helpful discussions. V.~K. and D.~V. also would like to thank University of California, Irvine, and the organizers of the International Conference Beyond Uniform Hyperbolicity 2013 in B\k{e}dlewo for their hospitality.

\section{Proof of Theorem \ref{t:main}}

Here we prove Theorem~\ref{t:main} using the properties of the random dynamics on a circle -- Theorem~\ref{t:contraction} and Addendum \ref{p:12} and \ref{p:detector}. The proofs of these random dynamics results are provided in Section \ref{s:random}.

Assume that two collections of generators $\{f_1^{(1)},\dots,f_k^{(1)}\}$ and $\{f_1^{(2)},\dots,f_k^{(2)}\}$ together with their lifts $\{\tf_1^{(1)},\dots,\tf_k^{(1)}\}$ and $\{\tf_1^{(2)},\dots,\tf_k^{(2)}\}$ are given. Suppose also that the translation numbers of the corresponding positive compositions coincide, i.e. for any finite word $\omega_1\omega_2\ldots \omega_n$, $\omega_i\in \{1, \ldots, k\}$, we have $\tau(\tf_{\omega_n}^{(1)}\circ\dots \circ \tf_{\omega_1}^{(1)})=\tau(\tf_{\omega_n}^{(2)}\circ\dots \circ \tf_{\omega_1}^{(2)})$.  For brevity, we denote by $\rho^{(1)}$ and $\rho^{(2)}$ the actions of $F_k^+$ on the circle, associated to these collections of generators, and by $\tilde\rho^{(1)}$ and $\tilde\rho^{(2)}$ respectively the actions on $\R$, associated to the aforementioned lifts.

Recall that there are three possible types of dynamics under assumptions of Theorem~\ref{t:main} (due to Antonov's Theorem~\ref{t:contraction}):
\begin{enumerate}
\item\label{i:measure} The dynamics with a common invariant measure;
\item\label{i:generic} Generic case: no common invariant measure, non-factorizable dynamics;
\item\label{i:cover} The dynamics is factorizable, that is, all the maps $f_i$ commute with the same homeomorphism $\varphi\in\HS$ of finite order~$l\ge 2$; on the quotient by the action of~$\varphi$, the dynamics is of the second type: non-factorizable and with no common invariant measure.
\end{enumerate}
We will refer to them as actions of types~(\ref{i:measure}),~(\ref{i:generic}) and~(\ref{i:cover}) respectively. Addedum~\ref{p:detector} allows us to detect which of the three cases of Theorem~\ref{t:contraction} corresponds to the action if translation numbers are given. Hence, if for the lifts $\tilde\rho^{(1)}, \tilde\rho^{(2)}$ their translation numbers coincide, both actions $\rho^{(1)}, \rho^{(2)}$ belong to the same type. Let us consider each type separately.

\subsection*{Type (1): the dynamics with a common invariant measure}

If  the generators $\{f_1, \ldots, f_k\}$ of an action $\rho$ exhibit a common invariant measure $\mu$, the function
$$
x\mapsto \begin{cases}
\tilde\mu([0,x]), & x\ge 0, \\
-\tilde\mu([x,0]), & x<0
\end{cases}
$$
maps the lift $\tilde\mu$ of this measure to the Lebesgue measure on~$\R$, and hence conjugates the dynamics to an action by translations by corresponding translation numbers. Hence, if lifts of two actions of type~(\ref{i:measure}) have the same translation numbers, they are (topologically) conjugate to the same action, and thus are conjugate.

%\end{proof}

\subsection*{Type (2): generic case (no common invariant measure, non-factorizable dynamics)}

Let us first sketch the proof under an additional assumption that the action is by $C^1$-diffeomorphisms; the general case uses similar arguments, but technically is more involved. First, for two such actions, there exists a word $w\in F_k^+$, for which both corresponding maps $\rho^{(1)}(w)$ and $\rho^{(2)}(w)$ are Morse-Smale with one attractor and one repeller. Indeed, due to some contraction-type arguments (sf.~\cite{DKN}),  a long random composition satisfies this property with probability that tends to one as the length of the composition tends to infinity. Proposition~\ref{p:7} is the generalization of this property that we need for the actions by homeomorphisms.

Now, consider a composition $gf^n h$ with large $n$, where $f$ is a Morse-Smale map with one attractor $a_f$ and one repeller $r_f$, and notice that it is also a Morse-Smale map with the attractor close to $g(a_f)$ and the repeller close to $h^{-1}(r_f)$, provided that these points are different (which is a generic situation). See Lemma~\ref{l:lim-h} below for a continuous analogue of these arguments. Finally, we claim that the translation numbers of lifts reflect the cyclic order of attractors and repellers of the compositions: see Lemma~\ref{l:arar} and Lemma~\ref{l:cfgh-h} below as well as Fig.~\ref{f:1}. Combining these arguments, one can conclude (we omit the technical details in this sketch) that the translation numbers reflect the cyclic order of the orbit of the attractor of a Morse-Smale map. This orbit is  dense in the circle, so mapping the orbit for one action to the corresponding orbit for another action (their cyclic order should be the same due to the coincidence of the translation numbers) we obtain the conjugacy between the actions.

Unfortunately, for an action by homeomorphisms the fixed points of compositions are generically non-isolated. To handle this technical difficulty, we replace iterations of a single Morse-Smale map by a convenient sequence of maps:

\begin{Def}
The family $\{\FN\}_{n\in \mathbb{N}}\subset\HS$ is an $(a,r)$-\emph{family}, where $a,r\in \bS^1$, if for any neighborhoods $U,V$ of $a$ and $r$ respectively for all $N$ sufficiently big one has $F_N(\bS^1\setminus V)\subset U$. If $a\neq r$, we say that it is an $(a,r)$-\emph{MS} family.
\end{Def}

\begin{Lm}\label{l:lim-h}
If $\{\FN\}$ be an $(a,r)$-family of homeomorphisms, $g,h\in \HS$, then $\{g\FN h\}$ is $(g(a),h^{-1}(r))$-family. In particular, if $g(a)\neq h^{-1}(r)$, the family $\{g\FN h\}$ is $(g(a),h^{-1}(r))$-MS.
\end{Lm}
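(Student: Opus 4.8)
The plan is to unwind the definition of an $(a,r)$-family and chase the images through the homeomorphisms $g$ and $h$, using that a homeomorphism of a compact space is uniformly continuous and sends neighborhoods to neighborhoods. Let $U'$ be a neighborhood of $g(a)$ and $V'$ a neighborhood of $h^{-1}(r)$; I must find $N_0$ so that $(g\FN h)(\bS^1\setminus V')\subset U'$ for all $N\ge N_0$. First I would set $U:=g^{-1}(U')$, a neighborhood of $a$ (since $g$ is a homeomorphism, the preimage of an open set containing $g(a)$ is an open set containing $a$), and $V:=h(V')$, a neighborhood of $r$ (same reasoning for $h$). Because $\{\FN\}$ is an $(a,r)$-family, there is $N_0$ with $\FN(\bS^1\setminus V)\subset U$ for all $N\ge N_0$. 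Now observe $h(\bS^1\setminus V')=\bS^1\setminus h(V')=\bS^1\setminus V$, since $h$ is a bijection of $\bS^1$; hence $\FN(h(\bS^1\setminus V'))\subset U$, and applying $g$ gives $g(\FN(h(\bS^1\setminus V')))\subset g(U)=U'$, which is exactly the required inclusion. Thus $\{g\FN h\}$ is a $(g(a),h^{-1}(r))$-family.

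For the last sentence, if $g(a)\ne h^{-1}(r)$ then by definition of an MS family (an $(a,r)$-family with $a\ne r$) the family $\{g\FN h\}$ is automatically a $(g(a),h^{-1}(r))$-MS family, since the two distinguished points are distinct. There is essentially nothing further to check.

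The only mild subtlety — and the closest thing to an obstacle — is keeping straight that the contraction in the definition is ``onto a neighborhood of $a$ of the complement of a neighborhood of $r$'', so that pre-composition with $h$ pulls back the \emph{repeller} side (hence $h^{-1}(r)$, not $h(r)$) while post-composition with $g$ pushes forward the \emph{attractor} side (hence $g(a)$). Once the roles of the two neighborhoods are identified correctly, the bijectivity identity $h(\bS^1\setminus V')=\bS^1\setminus h(V')$ makes everything line up, and no quantitative estimate is needed. I would present the argument in exactly the three moves above: pull back $U'$ through $g$ and push $V'$ through $h$, invoke the $(a,r)$-family property at the resulting neighborhoods, then re-apply $g$ and use bijectivity of $h$ to conclude.
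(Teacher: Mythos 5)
Your proof is correct and takes essentially the same route as the paper's: pull back the neighborhood of $g(a)$ through $g$, push the neighborhood of $h^{-1}(r)$ through $h$, invoke the $(a,r)$-family property, and use bijectivity of $h$ to identify $h(\bS^1\setminus V')$ with $\bS^1\setminus h(V')$. The only difference is a cosmetic swap of the primed and unprimed labels.
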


%\begin{Lm}\label{l:lim-h}
%Let $\{\FN\}$ be an $(a,r)$-family of homeomorphisms, $g,h\in \HS$, and suppose that $g(a)\neq h^{-1}(r)$. Then, $\{g\FN h\}$ is $(g(a),h^{-1}(r))$-MS family.
%\end{Lm}

\begin{proof}[Proof of Lemma~\ref{l:lim-h}]
Let $U$ and $V$ be neighborhoods of $g(a)$ and $h^{-1}(r)$ respectively; then, $U'=g^{-1}(U)$ and $V'=h(V)$ are neighborhoods of $a$ and $r$ respectively, and due to the definition for all $N$ sufficiently big we have $\FN(\bS^1\setminus V')\subset U'$. For any such $N$, 
$$
(g\FN h)(\bS^1\setminus V) = g(\FN(\bS^1\setminus V'))\subset g(U')=U.
$$
The last statement is a direct application of the definition.
%As $r\in h(V)$, $a\in g^{-1}(U)$, for any sufficiently big~$N$ one has $\FN(\bS^1\setminus h(V))\subset g^{-1}(U)$.
\end{proof}

\begin{Prop}\label{p:7}
Let $\rho^{(j)}$, $j=1,2$, be two actions of type~(\ref{i:generic}). Then there exist $a^{(j)},r^{(j)}\in\bS^1$, $j=1,2$, and a sequence of words $\{w^N\}$  such that the sequence of maps $\FN^{(j)}:=\rho^{(j)}(w^N)$ is $(a^{(j)},r^{(j)})$-MS, $j=1,2$.
\end{Prop}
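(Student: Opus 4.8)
The goal is to find a single sequence of words $w^N$ that simultaneously produces an $(a^{(j)},r^{(j)})$-MS family for *both* actions $\rho^{(1)}$ and $\rho^{(2)}$. The natural strategy is probabilistic: use Antonov's Theorem~\ref{t:contraction} and Addendum~\ref{p:12} on the product system, and then invoke the principle ``a positive-probability event is non-empty.'' Concretely, I would fix an arbitrary probability measure $\nu$ with full support on $\{1,\dots,k\}$, and consider the random compositions $F[n,\omega] = f_{\omega_n}\circ\cdots\circ f_{\omega_1}$ driven by the same $\omega$ for both actions. Since each action is of type~(\ref{i:generic}), i.e. type~(\ref{i:contr}) in Antonov's dichotomy, Addendum~\ref{p:12} gives us, for $\rho^{(j)}$ and almost every $\omega$, a point $r^{(j)}(\omega)$ such that $\diam \rho^{(j)}(F[n,\omega])(\bS^1\setminus V)\to 0$ for every neighborhood $V$ of $r^{(j)}(\omega)$; and by the convergence $T^n_\nu\delta_p\to\mu_+^{(j)}$ the images $\rho^{(j)}(F[n,\omega])(p)$ converge (for a fixed reference point $p$) to a point $a^{(j)}(\omega)$, also distributed according to $\mu_+^{(j)}$.

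Next I would argue that, with positive probability, these limit points are in "general position": namely $a^{(j)}(\omega)\neq r^{(j)}(\omega)$ for $j=1,2$. This is where the non-degeneracy must be extracted. Since the dynamics is not of type~(1) and not of type~(3), the stationary measure $\mu_+^{(j)}$ has no atoms (this is a standard consequence of minimality together with the absence of a common invariant measure — an atom of maximal mass would give a finite invariant set, forcing factorizability or an invariant measure), while $r^{(j)}$ is distributed according to $\mu_-^{(j)}$, the stationary measure of the inverse dynamics, which is also non-atomic for the same reason. Independence is not automatic here, so the cleanest route is: for fixed $\omega$, $r^{(j)}(\omega)$ is a tail-measurable function, whereas $a^{(j)}(\omega)=\lim \rho^{(j)}(F[n,\omega])(p)$ is also tail-measurable but its distribution $\mu_+^{(j)}$ is non-atomic; one then shows $\mathbb P(a^{(j)}(\omega)=r^{(j)}(\omega))=0$ for each $j$ by a disintegration/Fubini argument (conditioning on the tail and using non-atomicity of one of the two measures). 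Taking the union over $j=1,2$ still gives a probability-zero exceptional set, so the good event has full measure; in particular it is non-empty, and we may fix one such $\omega$.

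Finally, from this fixed good $\omega$ I would extract the words: set $w^N := \omega_N\omega_{N-1}\cdots\omega_1$ (the length-$N$ prefix, read in composition order), so that $\rho^{(j)}(w^N) = \rho^{(j)}(F[N,\omega])$. By Addendum~\ref{p:12} applied to $\rho^{(j)}$, for every neighborhood $V$ of $r^{(j)}$ we have $\diam \rho^{(j)}(w^N)(\bS^1\setminus V)\to 0$; combined with $\rho^{(j)}(w^N)(p)\to a^{(j)}$, and using that $\bS^1\setminus V$ eventually contains any prescribed neighborhood complement, one checks the $(a^{(j)},r^{(j)})$-family property: for any neighborhoods $U\ni a^{(j)}$, $V\ni r^{(j)}$, for $N$ large the set $\rho^{(j)}(w^N)(\bS^1\setminus V)$ has diameter less than the size of $U$ and contains the point $\rho^{(j)}(w^N)(p)$ which lies in $U$, hence the whole image is inside $U$. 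Since $a^{(j)}\neq r^{(j)}$ for both $j$, these are MS families, as required. The main obstacle is the middle step — proving $a^{(j)}(\omega)\neq r^{(j)}(\omega)$ almost surely — because $a^{(j)}$ and $r^{(j)}$ are built from the same sample path $\omega$ and are not independent, so one cannot simply multiply non-atomic distributions; the resolution is the tail-measurability/disintegration argument sketched above, leaning on the non-atomicity of $\mu_\pm^{(j)}$ which itself follows from excluding types (1) and (3).
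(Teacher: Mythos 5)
There is a genuine gap: you claim that for a.e.\ $\omega$ the pathwise orbit $\rho^{(j)}(F[n,\omega])(p)$ converges to a single point $a^{(j)}(\omega)$, deducing this from $T_\nu^n\delta_p\to\mu_+^{(j)}$. But that is convergence of distributions, not pathwise convergence; in fact, by the Kakutani random ergodic theorem (invoked elsewhere in the paper), for a.e.\ $\omega$ the orbit $F[n,\omega](p)$ \emph{equidistributes} with respect to $\mu_+^{(j)}$, and in particular is dense in $\bS^1$, so the limit does not exist. Your subsequent ``general position'' argument ($a^{(j)}\neq r^{(j)}$ a.s.\ via tail-measurability and non-atomicity of $\mu_\pm^{(j)}$) therefore cannot even get started, since the random variable $a^{(j)}(\omega)$ is not defined.

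The paper's proof avoids both problems. It does not try to take a limit over all $n$: after fixing a good $\omega$ with repellers $r^{(1)},r^{(2)}$ from Addendum~\ref{p:12}, it considers $P_n=\bigl(F^{(1)}[n,\omega](p),F^{(2)}[n,\omega](p)\bigr)\in\T^2$ and uses compactness to extract a \emph{subsequence} $n_N$ along which $P_{n_N}\to(a^{(1)},a^{(2)})$. The words are then taken to be the prefixes $\omega_1\cdots\omega_{n_N}$, which automatically gives $(a^{(j)},r^{(j)})$-families. And instead of a probabilistic argument that $a^{(j)}\neq r^{(j)}$, the paper handles the degenerate case deterministically: if $a^{(j)}=r^{(j)}$ for some $j$, minimality (or even absence of a finite invariant set) provides a word $\overline w$ with $\rho^{(j)}(\overline w)(a^{(j)})\neq r^{(j)}$, and precomposing via Lemma~\ref{l:lim-h} with $w^N=\overline w\,\omega^N$ upgrades the family to MS. You would need to replace your step $3$ by the compactness/subsequence extraction and your step $4$ by this deterministic fix-up to make the proof work.
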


\begin{proof}[Proof of Proposition \ref{p:7}]
Fix any probability measure $\nu$ supported on $\{1,\dots,k\}$ (for instance, the uniform one). This measure automatically defines the Bernoulli measure $\overline{\nu}=\nu^{\N}$ on the set of infinite sequences $\omega=\omega_1\omega_2\omega_3\ldots $, and hence the random dynamics in the sense of Theorem~\ref{t:contraction}.

Due to Addendum~\ref{p:12}, for almost every sequence $\omega$ there are random repellers $r^{(1)}$ and $r^{(2)}$ such that for any neighborhoods $V_1$ of $r^{(1)}$ and $V_2$ of $r^{(2)}$ we have
$$
\diam F^{(1)}[n,\omega](\bS^1\backslash V_1)\to 0 \ \ \ \text{and}\ \ \ \diam F^{(2)}[n,\omega](\bS^1\backslash V_2)\to 0 \quad \text{as } n\to \infty,
$$
where $F^{(j)}[n,\omega]=f_{\omega_n}^{(j)}\circ\dots \circ f_{\omega_1}^{(j)}, \ j=1,2.$

Take any such sequence $\omega$ and pick any point $(p^{(1)},p^{(2)})\in \bS^1\times \bS^1$ such that $p^{(j)}\neq r^{(j)}, \ j=1, 2$. Consider a sequence of points $P_n=\left(F^{(1)}[n,\omega](p), F^{(2)}[n,\omega](p)\right)\in \bS^1\times \bS^1=\mathbb{T}^2$. By compactness, there exists a subsequence $\{n_N\}_{N\in \mathbb{N}}$ such that $\{P_{n_N}\}$ converges,
$P_{n_N}\to\left(a^{(1)}, a^{(2)}\right)\in \bS^1\times \bS^1$ as $N\to \infty$. Set $\omega^N=\omega_1\omega_2\ldots\omega_{n_N}$. By construction, the sequence $\{\FN^{(j)}\}$ is $\left( a^{(j)}, r^{(j)}\right)$-sequence, $j=1,2$.

If $a^{(j)}\neq r^{(j)}$ for $j=1,2$, these sequences are MS and we are done. Otherwise, it is easy to see from the minimality (or even from the absence of finite invariant set) that there exists $\overline{w}$ such that $(\rho^{(j)}(\overline{w}))(a^{(j)}) \neq r^{(j)}$. The application of Lemma~\ref{l:lim-h} to $w^N=\bar{w}\omega^{N}$ concludes the proof.
\end{proof}

As we have said earlier, the key idea of the proof of Theorem~\ref{t:main} is that the translation numbers reflect the cyclic order of ``attractors'' and ``repellers'' on the circle. Namely, let us introduce the following definition:
\begin{Def}
For $f,g\in\HS$, denote by $c(f,g)$ the number
$$c(f,g):=\tau(\tf\circ \tg)-\tau(\tf)-\tau(\tg),$$
where $\tf$ and $\tg$ are respectively the lifts of $f$ and $g$.
\end{Def}
It is easy to see that the value of $c(f,g)$ does not depend on the choice of the lifts~$\tf$ and~$\tg$.

We have the following

\begin{Lm}\label{l:arar}
Let $f,g\in \HS$, and let $\mA_f,\mR_f,\mA_g,\mR_g\subset \bS^1$ be pairwise disjoint intervals such that
$$
f(\bS^1\setminus \mR_f)\subset \mA_f, \quad g(\bS^1\setminus \mR_g)\subset \mA_g.
$$
Then
\begin{itemize}
\item If there is an arc $J\supset \mA_f \cup \mA_g$ that does not intersect neither $\mR_f$ nor $\mR_g$, then $c(f,g)=0$;
\item If the cyclic order of these four arcs is $\mA_f,\mR_f,\mA_g,\mR_g$, then $c(f,g)=1$;
\item If the cyclic order of these four arcs is $\mR_f,\mA_f,\mR_g,\mA_g$, then $c(f,g)=-1$.
\end{itemize}
\end{Lm}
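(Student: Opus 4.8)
The plan is to reduce the computation of $c(f,g)$ to a direct estimate of $\tf \circ \tg$ on a single well-chosen point, using the fact that $\tau$ is a quasimorphism and that $\tau(\tf\circ\tg)$ can be computed from the displacement of any point that is ``trapped'' in a fundamental domain by the composition. First I would fix lifts $\tf,\tg$ of $f,g$. Since $c(f,g)$ is lift-independent, I may normalize so that $\tf$ and $\tg$ each have a fixed point in $\R$: indeed, the hypothesis $f(\bS^1\setminus\mathcal{R}_f)\subset\mathcal{A}_f$ with $\mathcal{A}_f\cap\mathcal{R}_f=\varnothing$ forces $f$ to have a fixed point (it maps the arc $\bS^1\setminus\mathcal{R}_f$, which contains $\mathcal{A}_f$, into $\mathcal{A}_f$, so by a connectedness/Brouwer argument $f$ fixes a point in $\overline{\mathcal{A}_f}$; likewise $g$ fixes a point in $\overline{\mathcal{A}_g}$). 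Hence we may choose the lifts $\tf,\tg$ so that $\tau(\tf)=\tau(\tg)=0$, and then $c(f,g)=\tau(\tf\circ\tg)$, an integer. So the whole lemma becomes: determine the integer $\tau(\tf\circ\tg)$ from the cyclic order of the four arcs.

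Next I would analyze the composition $h:=f\circ g$ directly on the circle. The key observation is: $g$ maps everything outside $\mathcal{R}_g$ into $\mathcal{A}_g$, and then $f$ maps everything outside $\mathcal{R}_f$ into $\mathcal{A}_f$; so $h(\bS^1\setminus \mathcal{R}_g)\subset f(\mathcal{A}_g)$, and, depending on whether $\mathcal{A}_g$ meets $\mathcal{R}_f$, this is either inside $\mathcal{A}_f$ (if $\mathcal{A}_g\cap\mathcal{R}_f=\varnothing$) or could be spread out. In the first two cases of the lemma one checks $\mathcal{A}_g$ is disjoint from $\mathcal{R}_f$ (in case 1 because $\mathcal{A}_g\subset J$ which misses $\mathcal{R}_f$; in case 2 from the cyclic order $\mathcal{A}_f,\mathcal{R}_f,\mathcal{A}_g,\mathcal{R}_g$), so $h(\bS^1\setminus\mathcal{R}_g)\subset\mathcal{A}_f$, i.e.\ $h$ itself is an ``$(\mathcal{A}_f,\mathcal{R}_g)$-type'' contraction with a fixed point in $\overline{\mathcal{A}_f}$. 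The integer $\tau(\tf\circ\tg)$ is then read off by tracking a lift of a single point: pick a point $x_0$ just outside (on the appropriate side of) $\mathcal{R}_g$, and follow $\tg(x_0)$, then $\tf(\tg(x_0))$, counting how many times the trajectory wraps around, which is governed entirely by where $\mathcal{A}_f,\mathcal{R}_f,\mathcal{A}_g,\mathcal{R}_g$ sit in cyclic order relative to the chosen fundamental domain $[x_0,x_0+1)$. Concretely: in case~1 one can choose the fundamental domain cut point inside the complementary arc of $J\cup\mathcal{R}_f\cup\mathcal{R}_g$, and then both $\tg$ and $\tf$ move points without crossing the cut, giving $\tau=0$; in case~2 the cyclic order forces exactly one extra wrap (the image lands in $\mathcal{A}_f$ which lies ``ahead'' of $\mathcal{R}_g$ all the way around), giving $\tau=1$; case~3 is the mirror image, giving $\tau=-1$.

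For the third case ($\mathcal{R}_f,\mathcal{A}_f,\mathcal{R}_g,\mathcal{A}_g$), rather than redo the bookkeeping I would invoke the symmetry $c(f,g)=-c(g^{-1},f^{-1})$, which follows from $\tau(\tilde h^{-1})=-\tau(\tilde h)$ and $(\tf\circ\tg)^{-1}=\tg^{-1}\circ\tf^{-1}$, together with the observation that $f^{-1}$ maps $\bS^1\setminus\mathcal{A}_f$ into $\mathcal{R}_f$ (and similarly for $g^{-1}$), so the roles of attractors and repellers swap and the cyclic order $\mathcal{R}_f,\mathcal{A}_f,\mathcal{R}_g,\mathcal{A}_g$ becomes, for the inverse maps, the order $\mathcal{A}_{f^{-1}},\mathcal{R}_{f^{-1}},\mathcal{A}_{g^{-1}},\mathcal{R}_{g^{-1}}$ of the appropriate type-2 shape, reducing case~3 to case~2 with a sign flip.

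I expect the main obstacle to be the careful choice of the fundamental domain cut point and the rigorous verification that the trajectory $x_0\mapsto\tg(x_0)\mapsto\tf(\tg(x_0))$ crosses the cut exactly the claimed number of times — this requires being precise about which side of $\mathcal{R}_g$ the point $x_0$ is taken on, and using that $g$ fixes a point in $\overline{\mathcal{A}_g}$ to control $\tg$ globally on the fundamental domain (so that $\tg$ does not itself contribute an unexpected wrap). Once the lifts are normalized to have zero translation number, this is a finite, essentially combinatorial case check, but getting the orientation conventions consistent across the three cyclic orderings is where the real care is needed; the symmetry argument for case~3 is what keeps this from doubling in length.
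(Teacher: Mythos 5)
Your approach is essentially the paper's: normalize the lifts so that $\tau(\tf)=\tau(\tg)=0$ (using the fixed points forced by the $(\mA,\mR)$-contraction structure), so that $c(f,g)=\tau(\tf\circ\tg)$, and then read this integer off the cyclic order of the four arcs by tracking images under the lifted composition. Case~1 is the paper's Brouwer argument in slightly different clothing: rather than picking a cut point in $\bS^1\setminus(J\cup\mR_f\cup\mR_g)$, the paper observes directly that $J$ is forward-invariant under both $f$ and $g$, so its lift $\tJ$ is $\tf$- and $\tg$-invariant and hence contains a fixed point of $\tf\circ\tg$.

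One place where your sketch is thinner than it needs to be is case~2. Tracking a \emph{single} point $x_0$ and ``counting wraps'' only pins $\tau(\tf\circ\tg)$ inside an open interval of length~$2$ around the observed displacement, which (even knowing $\tau$ is an integer, from the fixed point of $f\circ g$ in $\overline{\mA_f}$) leaves two possible values. The paper resolves this with a two-sided estimate from two different points: fixing lifts of the arcs with $\tmA_f<\tmR_f<\tmA_g<\tmR_g<\tmA_f+1$, a point $x$ between $\tmR_g-1$ and $\tmA_f$ has $\tf\tg(x)\in\tmA_f+1>x+1$, giving $\tau\ge 1$, while a point $x\in\tmA_g$ has $\tf\tg(x)\in\tmA_f+1<\tmA_g+1$, giving $\tau\le 1$. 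You should build that second estimate into your bookkeeping.

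Your symmetry reduction of case~3 is correct and is a genuine shortcut the paper does not use (it just says ``completely analogous''): indeed $c(f,g)=-c(g^{-1},f^{-1})$ from $\tau(\tilde h^{-1})=-\tau(\tilde h)$, and $f^{-1}(\bS^1\setminus\mA_f)\subset\mR_f$ swaps the roles $\mA_{f^{-1}}=\mR_f$, $\mR_{f^{-1}}=\mA_f$; the cyclic order $\mR_f,\mA_f,\mR_g,\mA_g$ is, after a cyclic rotation, exactly the case-2 order $\mA_{g^{-1}},\mR_{g^{-1}},\mA_{f^{-1}},\mR_{f^{-1}}$ for the pair $(g^{-1},f^{-1})$, so $c(f,g)=-1$ follows once case~2 is proved. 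This is a tidy way to avoid redoing the orientation bookkeeping.
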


\begin{proof}[Proof of Lemma~\ref{l:arar}]
%\textbf{TODO: change}

Since the maps $f$ and  $g$ have fixed points in the arcs $\mA_f$ and $\mA_g$ respectively, their rotation numbers are zeros. The statement of Lemma \ref{l:arar} does not depend on the particular choice of the lifts $\tf,\tg$, so we can choose the lifts with zero translation number.

%The statement of the lemma does not depend on the choice of lifts $\tf,\tg$, and as the maps $f$, $g$ have fixed points in the arcs $\mA_f$ and $\mA_g$ respectively, we can assume their lifts to be chosen to have zero translation number.
%In not all the four point~$a_f,r_f,a_g,r_g$ are distinct, then the point repeated twice is a common fixed point for $f$ and $g$, and its lift is hence a common fixed point for $\tf$ and $\tg$. Thus, any composition of $\tf$ and $\tg$ has zero translation number.
If $\mA_f$ and $\mA_g$ are adjacent, an arc $J$ that contains them but does not intersect neither $\mR_f$ nor $\mR_g$ is invariant under both~$f$ and~$g$. A lift~$\tJ\subset \mathbb{R}$ of this arc is an interval that is invariant under both~$\tf$ and~$\tg$:
$$\tf(\tJ)\subset \tJ,\ \ \ \tg(\tJ)\subset \tJ.$$
Hence, $\tJ$ is invariant under a composition of $\tf$ and $\tg$, and by Brower theorem this composition has a fixed point in~$\tJ$. Thus, $\tau(\tf\circ\tg)=0$, and hence $c(f,g)=0$.

%\begin{figure}[h]
%\includegraphics[scale=0.7]{rotating-2.pdf}  \qquad \includegraphics[scale=0.7]{rotating-1.pdf}
%\caption{Rotation numbers for the Morse-Smale compositions}\label{f:1}
%\end{figure}

\begin{figure}[h]
\includegraphics[scale=0.9]{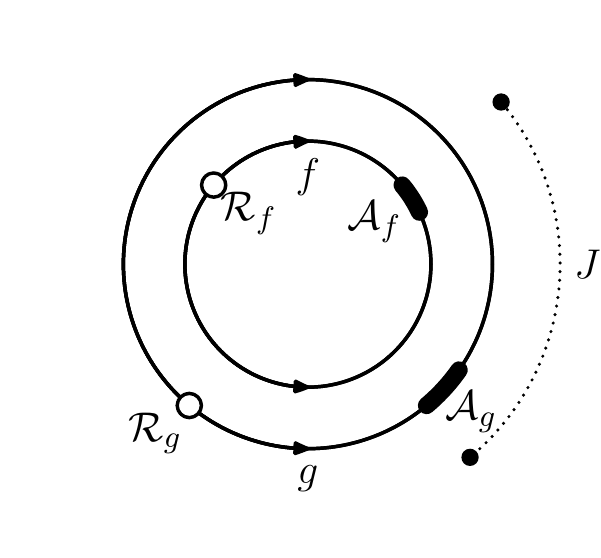}  \qquad \includegraphics[scale=0.9]{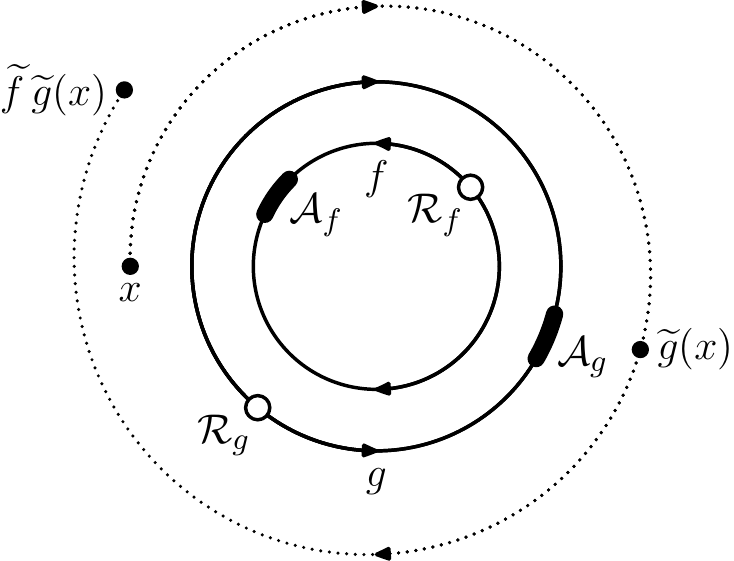}
\caption{Rotation numbers for the Morse-Smale compositions}\label{f:1}
\end{figure}

Assume now that the four arcs are in cyclic order $\mA_f,\mR_f,\mA_g,\mR_g$; choose lifts of these arcs so that
$\tmA_f<\tmR_f<\tmA_g<\tmR_g<\tmA_f+1$, where we write $X<Y$ if $x<y$ for any $x\in X$, $y\in Y$, and both sets are nonempty. Take now a point $x$ between $\tmR_g-1$ and $\tmA_f$. Then $\tg(x)\in \tmA_g$, and hence $\tf \circ\tg(x)\in \tmA_f+1>x+1$; see Fig.~\ref{f:1}. Thus, $\tau(\tf\circ\tg)\ge 1$. On the other hand, taking $x\in\tmA_g$, we have $\tg(x)\in \tmA_g$ and hence
$$
\tf\circ\tg(x)\in\tf(\tmA_g)\subset\tmA_f+1<\tmA_g+1,
$$
and thus obtaining $\tau(\tf\circ\tg)\le 1$. This implies that $\tau(\tf\circ\tg)= 1$. (These arguments are similar to those used by Calegari-Walker in~\cite{Calegari-Walker}.)

The last case is completely analogous.
\end{proof}

Lemma~\ref{l:lim-h} together with with Lemma~\ref{l:arar} immediately imply
\begin{Lm}\label{l:cfgh-h}
Let $\{\FN\}$ be $(a,r)$-MS family, and $g,h\in\HS$ be such that four points $a, r,g(a), h^{-1}(r)$ are pairwise different. Then for any $N$ sufficiently large
$$
c(\FN, g \FN h) = \left\{\begin{array}{ll}
1, & \text{if the points are in the cyclic order }\, a, r,g(a), h^{-1}(r); \\
-1, & \text{if the points are in the cyclic order }\, r, a, h^{-1}(r), g(a);\\
0, & otherwise.
\end{array}\right.
$$
\end{Lm}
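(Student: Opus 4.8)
The plan is to combine the two preceding lemmas directly. First I would apply Lemma~\ref{l:lim-h}: since $\{\FN\}$ is an $(a,r)$-family, the family $\{g\FN h\}$ is a $(g(a),h^{-1}(r))$-family. Because the four points $a,r,g(a),h^{-1}(r)$ are pairwise distinct, in particular $g(a)\neq h^{-1}(r)$, so $\{g\FN h\}$ is actually a $(g(a),h^{-1}(r))$-MS family. Thus for $N$ large we simultaneously have genuine ``attractor'' and ``repeller'' neighborhoods for both $\FN$ and $g\FN h$.

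Next I would fix disjoint closed arc-neighborhoods of the four points. Concretely, since $a, r, g(a), h^{-1}(r)$ are pairwise distinct, choose pairwise disjoint intervals $\mA_f \ni a$, $\mR_f \ni r$, $\mA_g \ni g(a)$, $\mR_g \ni h^{-1}(r)$; I can take them small enough that their cyclic order matches the cyclic order of the four points (this is where the hypothesis that the points are pairwise distinct is essential — it guarantees such disjoint representatives with the same cyclic type exist). Then, applying the defining property of an MS-family to $f := \FN$ with the pair $(\mA_f,\mR_f)$ and to $g := g\FN h$ with the pair $(\mA_g,\mR_g)$, for all $N$ sufficiently large we get $\FN(\bS^1\setminus \mR_f)\subset \mA_f$ and $(g\FN h)(\bS^1\setminus \mR_g)\subset \mA_g$. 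So the hypotheses of Lemma~\ref{l:arar} are met for the pair of maps $(\FN, g\FN h)$.

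Finally I would invoke Lemma~\ref{l:arar} for $(\FN, g\FN h)$: if the arcs are in cyclic order $\mA_f, \mR_f, \mA_g, \mR_g$ — equivalently the points are in cyclic order $a, r, g(a), h^{-1}(r)$ — then $c(\FN, g\FN h)=1$; if the cyclic order is $\mR_f, \mA_f, \mR_g, \mA_g$, i.e.\ the points are in order $r, a, h^{-1}(r), g(a)$, then $c(\FN, g\FN h)=-1$; and in every remaining case the four arcs can be enclosed in an arc $J \supset \mA_f\cup\mA_g$ avoiding $\mR_f\cup\mR_g$, so $c(\FN, g\FN h)=0$. Translating back from arcs to points gives exactly the three cases in the statement, uniformly for all sufficiently large $N$.

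I do not anticipate a genuine obstacle here — the statement is explicitly flagged as an immediate consequence — but the one point requiring a little care is the bookkeeping in the ``otherwise'' case: one should check that whenever the cyclic order of $a, r, g(a), h^{-1}(r)$ is neither of the two distinguished ones, the two attractor arcs $\mA_f$, $\mA_g$ are indeed cyclically adjacent (i.e.\ not separated by both repeller arcs), so that the arc $J$ of Lemma~\ref{l:arar} exists. Up to relabeling and the cyclic symmetry, the only cyclic orders of four distinct points on $S^1$ with the labels $\{a, g(a)\}$ and $\{r, h^{-1}(r)\}$ are: $a, r, g(a), h^{-1}(r)$; $r, a, h^{-1}(r), g(a)$; and $a, g(a), r, h^{-1}(r)$ (together with its reverse/rotations), and in this last family $\mA_f$ and $\mA_g$ are adjacent, so the $c=0$ conclusion applies. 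Choosing the arc-neighborhoods small enough that they retain the cyclic order of their centers makes this rigorous.
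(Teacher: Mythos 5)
Your proposal is correct and follows exactly the route the paper intends: the paper gives no separate proof of this lemma, stating only that Lemma~\ref{l:lim-h} together with Lemma~\ref{l:arar} immediately imply it, and your argument (pass to the $(g(a),h^{-1}(r))$-MS family $\{g\FN h\}$, choose small disjoint arc-neighborhoods preserving the cyclic order, and apply Lemma~\ref{l:arar}, checking that in all non-alternating orders the two attractor arcs are adjacent so $c=0$) is precisely that deduction, carried out carefully.
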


%\begin{proof}
%It suffices to take disjoint neighborhoods $\mA$, $\mR$, $\mA'$, $\mR'$ of $a$, $r$, $g(a)$, $h^{-1}(r)$ respectively, and apply Lemma~\ref{l:arar}.
%\end{proof}

%We already see from Lemma~\ref{l:cfgh-h} that the translation numbers ``feel'' the cyclic order of the attractors and repellers of Morse-Smale families and their images.

If Lemma~\ref{l:cfgh-h} was giving us a way of determining the cyclic order of four points completely, we would be able to conclude  by saying that the cyclic order of orbits of ``attractors'' $a^{(j)}$ of the MS-families, given by Proposition~\ref{p:7}, should be the same (as these orders would be determined using only the translation numbers, that coincide for both actions). And hence, we can  map monotonically the points of one of these orbits to the corresponding points of the other one. By definition such a map (extended by continuity as both orbits are dense) would conjugate the two actions.

%attractors and repellers is the same for our two actions $\rho^{(1)}$, $\rho^{(2)}$ as they can be determined via the translation numbers of $\tilde\rho^{(1)}$, $\tilde\rho^{(2)}$, that coincide for the respective words. Hence, we could map the attractors of Morse-Smale maps for one action to the attractors of the corresponding maps of the second action, thus due to Lemma~\ref{l:lim-h} obtaining a conjugacy between the two actions.
However, Lemma~\ref{l:cfgh-h} does not distinguish four different cyclic orders involving an ``attractor-attractor'' arc, and says nothing in the case of coincidence between attractors and repellers.
%What is even more important, we need an argument that will be generalizable to a non-smooth case.
In order to get a fully rigorous argument that handles all these possibilities (even in the non-smooth case!), we will apply the random dynamics argument.

A definition that we need  to handle the case if the assumptions of Lemma~\ref{l:cfgh-h} are not satisfied, is the following one.
\begin{Def}
$$
\Delta(\{\FN\}_{N\in \mathbb{N}},g,h):= \left\{ \begin{array}{ll}
\lim\limits_{N\to\infty} c(\FN, g\FN h), & \text{if the limit exists and equals $\pm 1$},\\
0 & \text{otherwise}.
\end{array}\right.
$$
\end{Def}

Let us again use the random dynamics, corresponding to the chosen measure~$\nu$: we have

\begin{Lm}\label{l:limit}
Let $\{\FN\}$ be an $(a,r)$-MS family, $g(a)\neq a,r$. Then, choosing $h$ to be a \emph{random} length $n$ composition (and denoting the corresponding expectation by $\E_n$), we have
$$
\lim_{n\to\infty} \E_n (\Delta(\{\FN\}_{N\in \mathbb{N}},g,h))= \left\{ \begin{array}{ll}
\mu_{-}([g(a),a]) & \text{if the cyclic order is $a,r,g(a)$};\\
-\mu_{-}([a,g(a)]) & \text{if the cyclic order is $a,g(a),r$},
\end{array}\right.
$$
where $\mu_-=\sum_j \nu(j) (f_j^{-1})_* \mu_-$ is the unique stationary measure for the dynamics of the inverse maps, and $[x,y]$ denotes the positively oriented arc from~$x$ to~$y$.

%\footnote{TODO: Rewrite}
\end{Lm}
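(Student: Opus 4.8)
The idea is to unpack the definition of $\Delta$ by combining Lemma~\ref{l:cfgh-h} with Addendum~\ref{p:12}. Fix the $(a,r)$-MS family $\{\FN\}$ and assume the cyclic order is $a,r,g(a)$ (the other case being symmetric). For a fixed finite word $h$ of length $n$, Lemma~\ref{l:cfgh-h} tells us that $c(\FN, g\FN h)$ stabilizes for large $N$ to a value in $\{-1,0,1\}$ determined by the cyclic position of the point $h^{-1}(r)$ relative to the three already-fixed points $a$, $r$, $g(a)$ — \emph{provided} $h^{-1}(r)$ is distinct from all three. When the four points are in cyclic order $a,r,g(a),h^{-1}(r)$ we get $\Delta = 1$; when they are in order $r,a,h^{-1}(r),g(a)$ we get $\Delta=-1$; otherwise $\Delta=0$. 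So once we sort out where $h^{-1}(r)$ lands, $\Delta(\{\FN\},g,h)$ is, for a.e.\ choice, exactly the indicator-type function of the arc: it equals $1$ when $h^{-1}(r)\in (g(a),a)$ (the arc from $g(a)$ to $a$ not containing $r$), equals $-1$ when $h^{-1}(r)$ lies in the arc from $a$ to $r$ on the appropriate side, and $0$ otherwise. A careful bookkeeping of the cyclic order $a,r,g(a)$ shows that in fact $\Delta=1$ precisely on the arc $[g(a),a]$ and $\Delta=0$ on the rest, so $\Delta(\{\FN\},g,h) = \mathbf{1}_{[g(a),a]}(h^{-1}(r))$ for $h$ outside a measure-zero set.

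Next I would pass to the random composition. Let $h = F[n,\omega]$ be the random length-$n$ composition. Then $h^{-1}(r) = F[n,\omega]^{-1}(r)$; equivalently, applying the inverse maps $f_{\omega_1}^{-1},\dots,f_{\omega_n}^{-1}$ in the reversed order, $h^{-1}(r)$ is the image of the fixed point $r$ under a length-$n$ composition of the inverse dynamics. By Addendum~\ref{p:12} (applied to the inverse dynamics, whose unique stationary measure is $\mu_-$), the distribution of these points converges weakly to $\mu_-$: for any fixed point $q$, $T^n_\nu$ applied appropriately to $\delta_q$ converges to $\mu_-$. Hence $\E_n(\Delta(\{\FN\},g,h)) = \E_n\big(\mathbf{1}_{[g(a),a]}(h^{-1}(r))\big) = \P_n\big(h^{-1}(r)\in [g(a),a]\big) \to \mu_-([g(a),a])$ as $n\to\infty$, which is the claimed limit. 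For the other cyclic order $a,g(a),r$ one gets $\Delta = -\mathbf{1}_{[a,g(a)]}(h^{-1}(r))$ by the same reasoning, giving the limit $-\mu_-([a,g(a)])$.

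The main obstacle, and the step requiring the most care, is the boundary/degeneracy analysis: Lemma~\ref{l:cfgh-h} only applies when $a,r,g(a),h^{-1}(r)$ are \emph{pairwise distinct}, and the definition of $\Delta$ was specifically designed to absorb the cases where the limit of $c(\FN,g\FN h)$ fails to exist or equals something outside $\{\pm 1\}$. I would need to check: first, that the set of $h^{-1}(r)$ coinciding with one of $a,r,g(a)$ is $\mu_-$-null — this follows because $\mu_-$ has no atoms in the non-factorizable case (type~(\ref{i:generic})), a fact that comes out of Antonov's theorem / the synchronization picture; and second, that on the complement of the arc $[g(a),a]$ the value of $\Delta$ is genuinely $0$ — i.e., that $c(\FN,g\FN h)$ there is either $0$ for large $N$, or fluctuates, but never stabilizes at $\pm 1$ with the wrong sign. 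Both reduce to a somewhat fussy but routine case analysis of the cyclic positions using the hypothesis $g(a)\neq a,r$; once the atomlessness of $\mu_-$ is granted, the weak convergence does the rest automatically since the boundary of the arc is $\mu_-$-null.
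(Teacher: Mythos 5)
Your proposal is correct and follows essentially the same route as the paper's proof: unpack $\Delta$ via Lemma~\ref{l:cfgh-h}, observe that it reduces (up to degeneracies) to the indicator of the appropriate arc evaluated at $h^{-1}(r)$, then use Addendum~\ref{p:12} together with the atomlessness of $\mu_-$ to pass the expectation to $\mu_-$ of that arc. The only nitpick is a phrasing one: for fixed $n$ the degenerate events $h^{-1}(r)\in\{a,r,g(a)\}$ are not ``measure zero'' but merely have probability tending to zero as $n\to\infty$, which (since $\Delta$ is bounded) still suffices.
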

%\begin{Cor}
%The point $g(a_f)$
%\end{Cor}

%\

%REWRITE FROM HERE...

%\

\begin{proof}[Proof of Lemma~\ref{l:limit}]
Due to Addendum \ref{p:12}, the distribution of random length $n$ inverse images $h^{-1}(r)$ converges to the stationary measure $\mu_-$ as $n\to\infty$. In particular, the probabilities of all the events $h^{-1}(r)=r$, $h^{-1}(r)=a$ and $h^{-1}(r)=g(a)$ tend to zero as~$n\to\infty$. Hence, with probability that tends to~$1$, assumptions of Lemma~\ref{l:cfgh-h} are satisfied. We thus obtain that in the case if three points $a,r,g(a)$ are in this cyclic order, $\Delta(\FN,g,h)$ is equal to~$1$ if $h^{-1}(r)$ falls on the arc $(g(a),a)$ and zero if it falls on any of two other open arcs. In the same way, in the case of the cyclic order $a,g(a),r$ we see that $\Delta((\FN),g,h)$ equals~$-1$ if $h^{-1}(r)$ falls on the arc $(a,g(a))$ and zero if it falls on any of two other open arcs. The probability of $h^{-1}(r)$ falling on a given arc tends to the $\mu_-$-measure of this arc, and this concludes the proof.
\end{proof}

We are now ready to conclude the proof of the main theorem.  Namely, let $\rho^{(1)}$, $\rho^{(2)}$ be two actions of $F_k^+$ on the circle, such that for their lifts $\tilde\rho^{(1)}$, $\tilde\rho^{(2)}$ for any word $u\in F_k^+$ one has $\tau(\tilde\rho^{(1)})=\tau(\tilde\rho^{(2)})$.

By Proposition~\ref{p:7}, there exist words $w_N$ in $F_k^+$ such that the sequence $\FN^{(j)}:=\rho^{(j)}(w)$ is an $(a^{(j)},r^{(j)})$-family, $j=1,2$. Take an orientation-preserving map $\psi:\bS^1\to\bS^1$ that maps~$a^{(1)}$ to~$a^{(2)}$ and $\mu_-^{(1)}$ to $\mu_-^{(2)}$.

Say that a word $u\in F_k^+$ is \emph{admissible} if for $g^{(1)}:=\rho^{(1)}(u)$ and $g^{(2)}:=\rho^{(2)}(u)$ we have
\begin{equation}\label{eq:diff4}
g^{(1)}(a^{(1)})\neq a^{(1)}, r^{(1)} \quad \text{and } g^{(2)}(a^{(2)})\neq a^{(2)}, r^{(2)}.
\end{equation}
and that a word $u$ is \emph{good} if it is admissible as well as all the words $iu$, for $i=1,\dots,k$.

For any admissible word~$u$, Lemma~\ref{l:limit} implies that the map $\psi$ sends $g^{(1)}(a^{(1)})$ to $g^{(2)}(a^{(2)})$: indeed, the measure~$\mu_-$ of the corresponding intervals is defined in terms of $\Delta(\cdot,\cdot,\cdot)$, that uses only the translation numbers of different compositions and iterations.

Now note, that the probability of a length $n$ word $u$ being admissible tends to~$1$: the measures $\mu_+^{(j)}$ are non-atomic and thus any of the four possible equalities in~\eqref{eq:diff4} has asymptotically vanishing probability.
Hence, the same hold for the probability of $u$ being good. Consider the sets of images $\mathcal{O}^{(j)}$, $j=1,2$ of good images of $a^{(j)}$:
$$
\mathcal{O}^{(j)}:=\{ g^{(j)}(a^{(j)}) \mid g^{(j)}=\rho^{(j)}(u),\, \text{ where } u \text{ is good}\} , \quad j=1,2.
$$
Note that as the probability of a random length $n$ word being good tends to~$1$, these images are asymptotically distributed w.r.t. the stationary measure $\mu_+^{(j)}$, and in particular are dense.

On the other hand, for any good word $u$ the map $\psi$ sends $g^{(1)}(a^{(1)})$ to $g^{(2)}(a^{(2)})$, where $g^{(j)}=\rho^{(j)}(u)$, and $f_i^{(1)}(g^{(1)}(a^{(1)}))=\rho^{(1)}(iu)(a^{(1)})$ to $f_i^{(2)}(g^{(2)}(a^{(2)}))$. Hence,  the equalities $\psi f_i^{(1)}=f_i^{(2)} \psi$, $i=1,\dots, k$ hold on $\mathcal{O}^{(1)}$. This set is dense; by continuity we have $f_i^{(2)}\psi=\psi f_i^{(1)}$ everywhere on the circle, thus obtaining the desired conjugacy between the two actions.

\subsection*{Type (3): factorizable dynamics}

Let $\tilde\rho^{(1)}, \tilde\rho^{(2)}$ be two lifts of actions of type~(\ref{i:cover}) having the same translation numbers. Without loss of generality (making, if necessary, a change of variable), we can assume that the stationary measure for both actions on the circle is the Lebesgue one. Then, the factorization maps $\varphi^{(1),(2)}$ can be taken to be equal to $x\mapsto x+1/l$ (as they should preserve the unique stationary measure). Hence, the lifts of actions of the factorized dynamics are obtained from $\tilde\rho^{(j)}$ ($j=1,2$) by an $l$-times rescaling  (we consider the factor phase space as a circle of length one, not of length~$1/l$). Thus, and their translation numbers are $l$ times bigger than those of $\tilde\rho^{(j)}$, and hence also coincide.

The factorized actions are of type~(\ref{i:generic}), and thus (as we showed above) are topologically conjugate. The conjugacy can be lifted to the initial circle.

This completes the proof of Theorem \ref{t:main}.

\section{Random dynamics}\label{s:random}
For reader's convenience, we recall here the proof of Antonov's Theorem~\ref{t:contraction} from his great work~\cite{A} (that remained mostly unknown to the mathematical community for a long time):
% (with some minor modifications of changing it to the modern language):

%Let us first briefly recall the proof of Antonov's Theorem~\ref{t:contraction}.
\begin{proof}[Proof of Theorem~\ref{t:contraction}]
Take any measure $\mu_-$, stationary for the random dynamics of the maps $f_i^{-1}$, that is, satisfying
$$
\mu_-=\sum_i \nu(i) (f_i^{-1})_* \mu_-.
$$
Then for any $x,y\in S^1$ the random process
$$
\xi_n=\mu_-([F[n,\omega](x),F[n,\omega](y)])
$$
is a martingale, taking values in $[0,1]$. Indeed, it suffices to check that
\begin{multline*}
\E \xi_1 = \sum_i \nu(i) \mu_- ([ f_i(x),f_i(y)]) = \sum_i \nu(i)  \mu_- ((f_i^{-1})^{-1}[x,y]) =
\\ = \sum_i \nu(i) (f_i^{-1})_* \mu_- ([x,y]) = \mu_- ([x,y]) = \xi_0,
\end{multline*}
and then apply it for $x'=F[n,\omega](x), \, y'=F[n,\omega](y)$.

As $\xi_n$ is a bounded martingale, it converges almost surely, i.e. the limit $t(\omega)=\lim_{n\to\infty} \xi_n(\omega)$ exists almost surely (see, e.g.,~\cite[Ch.~VII, \S{}4]{Sh}). Note now that due to standard arguments for any pair of initial points $x,y$ almost surely the limit set $\mL(\omega)$ of the sequence $(F[n,\omega](x),F[n,\omega](y))\subset \T^2$ is forward-invariant under all the maps $f_i\times f_i:\T^2\to \T^2$. On the other hand,  due to the minimality of the dynamics the projection of $\mL(\omega)$ on the first coordinate coincides with the whole circle. As minimality also implies that the measure $\mu_-$ has no atoms and $\text{supp}\, \mu_{-}=S^1$, we have an almost sure description of $\mL(\omega)$:
$$
\mL(\omega)=\{(x',y') \mid \mu_-([x',y'])=t(\omega)\}.
$$
 In other words, a graph of the ``$\mu_-$-rotation'' by $t(\omega)$ almost surely is invariant under all $f_i\times f_i$, and hence such a ``rotation'' commutes with all the~$f_i$.

Now, consider the union of essential images of~$t(\cdot)$ over all the pairs $x,y\in S^1$, and a subgroup of the circle $\R/\Z$ generated by this set. If this subgroup is infinite, by continuity all the maps~$f_i$ commute with \emph{all} the $\mu_-$-rotations, and hence have a common invariant measure~$\mu_-$. We are thus in the case~(\ref{i:measure}). If the subgroup is trivial then for any $x, y\in S^1$ we have $t(\omega)=0$ or~$1$ almost surely, which implies that we are in the case~(\ref{i:generic}). The set $\mL(\omega)$ is the diagonal~$\{x=y\}$ in this case. Finally, if the subgroup is finite, it is generated by a rotation by $1/l$ for some $l\in \mathbb{N}, l>1$, and all the maps $f_i$ commute with a ``$\mu_-$-rotation'' of order $l$. This corresponds to the case~(\ref{i:cover}) in Theorem~\ref{t:contraction}.

Finally, assume that we are in the case~(\ref{i:generic}), and let us prove the convergence of distributions to the stationary measure. Namely, take and fix any stationary measure~$\mu_+$ (that exists due to the standard arguments like time-averaging Krylov-Bogolyubov theorem). In addition to the deterministic initial point $p\in \bS^1$, consider a \emph{random} initial point $p'\in \bS^1$, distributed w.r.t. $\mu_+$, chosen independently from the sequence~$\omega$ of iterations. Then  its random image $F[n,\omega](p')$ is also distributed w.r.t.~$\mu_+$ (by definition of a stationary measure). On the other hand, the distances between random images of $p$ and of $p'$ converge to 0 almost surely. Hence, the distribution of images of $p$ converges to~$\mu_+$.   %\footnote{Do we need it? If no, we should probably write:"It is not hard to show that this convergence is in fact uniform in~$p$, but since we are not using this fact, we do not elaborate on it here." If yes, then some more details are needed.}

Moreover, this implies that the stationary measure is unique (in particular, this implies that the convergence above is uniform in~$p$). Indeed,  if $\mu$ is another stationary measure, let us  take the initial point~$p$ to be regular w.r.t.~$\mu$. On the one hand, the random iterations of~$p$ should stay distributed w.r.t.~$\mu$, on the other hand, their distribution converges to~$\mu_+$, and this leads to a contradiction.
\end{proof}

\begin{proof}[Proof of Addendum~\ref{p:12}]
Theorem~\ref{t:contraction} says that for any two points $x,y\in \bS^1$ their random images approach each other almost surely, and thus almost surely the length of the random the images of the arc $[x,y]$ tends to~$0$ or to $1$:
\begin{equation}\label{eq:arc-im}
\forall x,y\in\bS^1 \quad \lim_{n\to\infty} \left|F[n,\omega]([x,y]) \right| \in \{0,1\} \quad \overline{\nu}-\text{a.s.}
\end{equation}
In particular,~\eqref{eq:arc-im} holds for $\overline{\nu}$-a.e. $\omega$ for all dyadic rational points $x,y$ (as it is a countable family of a.s. satisfied conditions).

Consider any such $\omega$, take any $m\in\N$, and consider $2^m$ points $0,\frac{1}{2^m},\dots,\frac{2^m-1}{2^m}$ and the arcs to which they divide the circle. As the total length of the circle is preserved, there is exactly one of these arcs such that the length of its images tends to~$1$, and the length of images of its complement tends to zero. Denote this (closed) arc by $I_m(\omega)$.

The sequence  $\{I_m\}_{m\in \mathbb{N}}$ form a nested sequence  $I_1\supset I_2\supset \ldots \supset I_m \supset \ldots$, $|I_m|=\frac{1}{2^m}$, and hence has a unique common point $r(\omega)\in \bigcap_m I_m(\omega)$. For any neighborhood~$V$ of this point, for some $m$ one has $I_m\subset V$, and hence
$$
\diam F[n,\omega] (\bS^1\setminus V) \le \diam F[n,\omega] (\bS^1\setminus I_m) \to 0 \quad \text{ as } n\to \infty.
$$

Now, for any $\omega$ there exists at most one point such that the length of images of complement to any of its neighborhoods tends to zero (otherwise, the total length of the image of the circle would tend to zero). Also, % Considering the first iteration we thus see that
for a.e. $\omega'$ we have
$$
r(\sigma(\omega')) = f_{\omega'_1}(r(\omega')),
$$
where $\sigma:\{1, 2, \ldots, k\}^{\mathbb{N}}\to \{1, 2, \ldots, k\}^{\mathbb{N}}$ is the topological Bernoulli shift.
Rephrasing this as $f_i^{-1}(r(\omega))= r(i\omega)$ for all $i=1,\dots,k$ and $\overline{\nu}$-a.e. $\omega$, we see that the diagram
$$
\begin{CD}
\{1,\dots,k\}^\N @>\omega\mapsto i\omega >> \{1,\dots,k\}^\N \\
@VV{r(\cdot)}V @VV{r(\cdot)}V\\
\bS^1 @>f_i^{-1}>> \bS^1
\end{CD}
$$
is $\overline{\nu}$-a.e. commutative for all~$i$. As the measure $\overline{\nu}$ is stationary for the maps $\omega\mapsto i\omega$, its $r(\cdot)$-image~$\mu_-$ is hence stationary for~$f_i^{-1}$.
\end{proof}

\begin{proof}[Proof of Addendum~\ref{p:detector}]

\textbf{Case of type~(\ref{i:measure}) dynamics}: in this case, the translation number is additive under the composition, and the statement is a standard equidistribution ergodic theorem for a system of rotations. Namely, the rotation numbers of compositions that we are considering have the distribution
$$
\mu_N:=\frac{1}{N}\sum_{n=1}^{N} \sum_{i_1+\dots+i_k=n} \left( {n \atop i_1,\dots,i_k} \right) \, \nu(1)^{i_1} \dots \nu(k)^{i_k} \delta_{i_1 \alpha_1+\dots+i_k\alpha_k},
$$
where $\alpha_j=\rho(f_j)$ and their sum is considered on the circle~$\R/\Z$. Indeed, any of $\left( {n \atop i_1,\dots,i_k} \right)$ compositions of length $n$ that has $i_j$ applications of $f_j$ ($j=1,\dots,k$), has the rotation number $i_1\alpha_1+\dots+i_k \alpha_k$ due to the additivity of the rotation number that holds in this case.

Consider the action of $\Z^k$ on the circle $\R/\Z$, with the generators acting by rotations by $\alpha_1,\dots,\alpha_k$ respectively. Then the measure $\mu_N$ is the distribution of image of $0\in\R/\Z$ by a random element of $\Z^k$, chosen w.r.t. the measure
$$
\frac{1}{N}\sum_{n=1}^{N} \sum_{i_1+\dots+i_k=n} \left( {n \atop i_1,\dots,i_k} \right) \, \nu(1)^{i_1} \dots \nu(k)^{i_k} \delta_{(i_1,\dots,i_k)}.
$$
But this sequence of measures (as it is easy to check) is a F\o{}lner one. Hence, the measures obtained using them can accumulate only to common invariant measures of all the rotations~$R_{\alpha_j}$. However, even one irrational rotation has a unique invariant measure, the Lebesgue one. Thus, the measures $\mu_N$ tend to the Lebesgue one.

\textbf{Case of type~(\ref{i:det2}) dynamics}. Informally speaking, Antonov's theorem implies that a sufficiently long random composition $f$ of homeomorphisms $\{f_i\}$ maps a complement to a small arc $\mR$ to a small arc $\mA$. We would like to say that $\mR$ and $\mA$ are almost independent. Indeed, in a sense $\mR$ is mostly determined by the first maps applied, while $\mA$ mostly depends on the last ones. Due to this almost-independence, $\mA$ and $\mR$ are disjoint with probability close to~$1$, thus implying $f(\mA)\subset f(\bS^1\setminus \mR) \subset \mA$. Hence $f$ is likely to have a fixed point in $\mA$, and thus zero rotation number.

%% Replaced \omega -> w

Let us make this approach formal. Namely, set $n':=[n/2]$ and $n''=n-n'$. Decompose the word $w=w_n\dots w_1$ into concatenation $w=w'w''$ of two words of length $n'$ and $n''$ respectively:
$$
F[n,w]=(f_{w_{n'}'}\dots f_{w_{1}'}) \circ (f_{w_{n''}''}\dots f_{w_{1}''})=F_2\circ F_1.
$$
For a given small $\varepsilon>0$ denote by $p_m(\varepsilon)$ the probability that there are two arcs $\mA, \mR\subset S^1$ such that
$$
|\mA|=|\mR|=\varepsilon, \ \ \ \text{and} \ \ \ F[m, \omega](S^1\backslash \mR)\subset \mA.
$$
Due to Antonov's Theorem, for a fixed $\varepsilon>0$ we have $p_m(\varepsilon)\to 1$ as $m\to \infty$.

Let us apply this statement to the maps $F_1, F_2$. Namely, for a given $\varepsilon>0$ with probabilities $p_{n'}(\varepsilon)$ and $p_{n''}(\varepsilon)$, there are arcs
$\mA',\mR'\subset\bS^1$, $\mA'',\mR''\subset\bS^1$  respectively such that
$$
|\mA'|=|\mR'|=\varepsilon, \quad F_1(\bS^1\setminus \mR')\subset \mA', \ \ \text{and}\ \  |\mA''|=|\mR''|=\varepsilon,  \quad F_2(\bS^1\setminus \mR'')\subset \mA''.
$$
Now we would like to exploit the idea that the couples of intervals $(\mA',\mR')$ and $(\mA'',\mR'')$ are defined using different letters of the initial word~$w$, and hence (for any deterministic way of assigning them) are independent. Let us estimate the probability of the event that $\mR'$ and $\mA''$ are defined and $\mR'\cap\mA''=\emptyset$. To do so, let us estimate its the conditional probabilities given $w'$ (and thus $\mR'$, if it is defined).  The arc $\mA''$ is not defined with the probability at most $1-p_{n''}(\eps)$. If it is defined then either $\mR'\cap \mA''=\emptyset$ or $\mA''\subset U_{\eps}(\mR')=:I$. Since $|\mR'|=\eps$, the length of the interval $I$ is equal to $3\eps$.

Now let us use the stationarity of the measure $\mu_+$. It is equal to the average of its images, and hence for any fixed interval  $I$ we have
$$
\mu_+(I) = \E ((F[n'',w''])_*\mu_+) (I) = \E \mu_+(F[n'',w'']^{-1}(I)).
$$
If $w''$ is such that $\mA''$, $\mR''$ are defined and $\mA''\subset I$, then
$$\mu_+ (F[n'',w'']^{-1}(I)) \ge \mu_+ (F[n'',w'']^{-1}(\mA'')) \ge \mu_+(\bS^1\setminus \mR'')\ge 1-\max_{|J|=\eps} \mu_+(J).
$$
Thus, we have
$$
\max_{|J|=3\eps} \mu_+(J) \ge \mu_+(I)\ge \P(\text{$\mA''$ is defined and}\  \mA''\subset I)  \cdot (1-\max_{|J|=\eps} \mu_+(J)).
$$
Therefore
$$
\P(\text{$\mA''$ is defined and}\ \mA''\subset I) \le \frac{\max_{|J|=3\eps} \mu_+(J)}{ (1-\max_{|J|=\eps} \mu_+(J))}. % + (1-p_{n''}(\eps)).
$$
Finally, integrating over $w'$, we have
$$
\P(\text{$\mA''$ and $\mR'$ are defined and}\ \mR'\cap \mA''\neq\emptyset) \le p_{n'}(\eps)\cdot \frac{\max_{|J|=3\eps} \mu_+(J)}{ (1-\max_{|J|=\eps} \mu_+(J))}.
$$
Choosing sufficiently small $\eps$, we can make the right hand side arbitrarily small.  Hence, as $n\to\infty$, the probability of $\mR'$ and $\mA''$ being defined and disjoint tends to~$1$.

Analogously, as $n\to\infty$, the probability of $\mR''$ and $\mA'$ being defined and disjoint tends to~$1$. If $\mR'\cap \mA''=\emptyset$ and $\mR''\cap \mA'=\emptyset$, we have
$$
F(\bS^1\setminus \mR'') =F_1(F_2(\bS^1\setminus \mR'')) \subset F_1 (\mA'') \subset F_1(\bS^1\setminus \mR')\subset \mA' \subset \bS^1\setminus \mR'',
$$
and hence the rotation number of $F$ is zero.

\textbf{Case of type~(\ref{i:det3}) dynamics.} The factorization by $\varphi$ leads to a type~(\ref{i:det2}) dynamics; due to the already proven conclusion, for the factorized dynamics the rotation number of a random length $n$ composition is zero with probability tending to~$1$ as $n\to\infty$. Hence, for the original random dynamics the rotation number with probability tending to~$1$ takes values in~$\{0,\frac{1}{l},\dots, \frac{l-1}{l}\}$. To complete the proof in this case, we have to show that among these values, the rotation number of a random composition of a random length $n$ chosen among $1,\dots,N$ is asymptotically equidistributed as $N\to\infty$.

For simplicity of the following arguments, assume that $\varphi$ is chosen so that for any $x\in\bS^1$ the points $x,\varphi(x),\dots,\varphi^{l-1}(x)$ are in this cyclic order (or, what is the same, the rotation number of $\varphi$ is $1/l$): if this is not the case, we can replace $\varphi$ by its convenient power. We will need the following statement, generalizing one of the conclusions of Antonov's theorem:

\begin{Prop}
In the case of type~(\ref{i:cover}) dynamics, the stationary measure is unique.
\end{Prop}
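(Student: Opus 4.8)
The plan is to push the problem down to the quotient circle $\hat{\bS}^1:=\bS^1/\langle\varphi\rangle$ — where it becomes the uniqueness of the stationary measure for a type-(\ref{i:generic}) action, already contained in the proof of Theorem~\ref{t:contraction} — and then to control how a stationary measure on $\bS^1$ distributes over the $\mathbb Z/l$-fibres of $\pi\colon\bS^1\to\hat{\bS}^1$. (Recall that $\varphi$ is normalized to have rotation number $1/l$.) Let $g_1,\dots,g_k\in\Homeo_+(\hat{\bS}^1)$ be the factorized maps; by the definition of type-(\ref{i:cover}) dynamics the family $\{g_i\}$ is of type~(\ref{i:generic}), hence, by the last paragraph of the proof of Theorem~\ref{t:contraction}, it has a \emph{unique} stationary measure $\hat\mu_+$, which moreover is non-atomic. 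If $\mu$ is any $T_\nu$-stationary measure on $\bS^1$, then $\pi_*\mu$ is stationary for $\{g_i\}$, so $\pi_*\mu=\hat\mu_+$; in particular $\mu$ has no atoms. Since $\varphi$ commutes with every $f_i$, the $\varphi$-average $\mu^\star:=\frac1l\sum_{m=0}^{l-1}(\varphi^m)_*\mu$ is again stationary and is $\varphi$-invariant; a non-atomic $\varphi$-invariant probability measure is uniquely determined by its $\pi$-image (on a fundamental arc $D$ of $\varphi$ one has $\mu^\star(B)=\frac1l\hat\mu_+(\pi(B))$ for $B\subseteq D$). Hence there is exactly one $\varphi$-invariant stationary measure, and it suffices to prove that \emph{every} stationary $\mu$ is $\varphi$-invariant, i.e.\ $\mu=\mu^\star$.

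To set up the random dynamics, apply Addendum~\ref{p:12} to the type-(\ref{i:generic}) factor $\{g_i\}$: for $\overline{\nu}$-a.e.\ $\omega$ there is a point $\hat r(\omega)\in\hat{\bS}^1$ with $\hat r(\sigma\omega)=g_{\omega_1}(\hat r(\omega))$, and $\hat r$ is distributed according to the $\{g_i^{-1}\}$-stationary measure $\hat\mu_-$, which has full support (minimality of the inverse factor semigroup). The $l$ points of $\pi^{-1}(\hat r(\omega))$ split $\bS^1$ into $l$ arcs $J_1(\omega),\dots,J_l(\omega)$ (which we regard as an unordered $l$-element set), permuted cyclically by $\varphi$; from $\pi f_{\omega_1}=g_{\omega_1}\pi$ one gets the equivariance
$$
\{J_1(\omega),\dots,J_l(\omega)\}=\bigl\{\,f_{\omega_1}^{-1}(J)\ :\ J\in\{J_1(\sigma\omega),\dots,J_l(\sigma\omega)\}\,\bigr\}.
$$

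The heart of the argument is to show that $\mu$ assigns mass exactly $\frac1l$ to each $J_j(\omega)$, for $\overline{\nu}$-a.e.\ $\omega$. Consider $H(\omega):=\sum_{j=1}^l\mu(J_j(\omega))^2$; since $\sum_j\mu(J_j(\omega))=1$ (no atoms), one has $H\ge\frac1l$, with equality exactly when the $l$ masses coincide. Unfolding $\omega=(\omega_1,\sigma\omega)$ with $\omega_1\sim\nu$ independent of $\sigma\omega\sim\overline{\nu}$, using the equivariance above, the stationarity identity $\mu(J)=\sum_i\nu(i)\mu(f_i^{-1}J)$, and $\E_{i\sim\nu}[X^2]=(\E_{i\sim\nu}X)^2+\mathrm{Var}_{i\sim\nu}(X)$, one obtains
$$
\E_{\overline{\nu}}[H]\;=\;\E_{\overline{\nu}}[H]\;+\;\E_{\overline{\nu}}\Bigl[\,\sum_{j=1}^l\mathrm{Var}_{i\sim\nu}\bigl(\mu(f_i^{-1}J_j(\omega))\bigr)\Bigr],
$$
so this last expectation vanishes; being a sum of nonnegative terms, for a.e.\ $\omega$ and each $j$ the quantity $\mu(f_i^{-1}J_j(\omega))$ is independent of $i$, hence equals $\mu(J_j(\omega))$. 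Since $\hat\mu_-$ has full support, the endpoints of these arcs, taken over $\overline{\nu}$-a.e.\ $\omega$, form a dense subset of $\bS^1$; as $x\mapsto\mu([x,\varphi(x)])$ and $x\mapsto\mu(f_i^{-1}[x,\varphi(x)])$ are continuous ($\mu$ being non-atomic), we get $\mu(f_i^{-1}[x,\varphi(x)])=\mu([x,\varphi(x)])$ for \emph{all} $x$ and all $i$. Thus $\psi(x):=\mu([x,\varphi(x)])$ is a continuous function with $\psi\circ f_i=\psi$ for every $i$; since $\langle f_1,\dots,f_k\rangle_+$ acts minimally, $\psi$ is constant, and $\sum_{m=0}^{l-1}\psi(\varphi^m(x))=1$ forces $\psi\equiv\frac1l$.

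Finally, $\mu([x,\varphi(x)])\equiv\frac1l$ says, on lifts ($\tilde\varphi^{\,l}=\mathrm{id}+1$ since $\varphi$ has rotation number $1/l$), that the distribution function $\tilde\Phi$ of $\tilde\mu$ satisfies $\tilde\Phi\circ\tilde\varphi=\tilde\Phi+\frac1l$ — which is precisely $\varphi$-invariance of $\mu$; hence $\mu=\mu^\star$, and uniqueness follows. (Equivalently, once every stationary measure is known to carry mass $\frac1l$ on each $J_j(\omega)$, the coupling argument from the last paragraph of the proof of Theorem~\ref{t:contraction} applies verbatim: for two stationary $\mu_1,\mu_2$ the measures $(F[n,\omega])_*\mu_1$ and $(F[n,\omega])_*\mu_2$ both become weak-$*$ close to the same empirical measure $\frac1l\sum_m\delta_{\varphi^m(c_n(\omega))}$ on a moving $\varphi$-orbit, so $\mu_1=\E_\omega(F[n,\omega])_*\mu_1=\E_\omega(F[n,\omega])_*\mu_2=\mu_2$.) I expect the third paragraph to be the crux: one has to lift Addendum~\ref{p:12} to the $l$-fold cover while keeping the arc-family equivariant, and then squeeze out of the bare stationarity of $\mu$ — through the strict convexity of $t\mapsto t^2$ and the product structure of $\overline{\nu}$ — that $\mu$ is equidistributed among the $l$ arcs; the full support of $\hat\mu_-$ is what then converts this almost-sure statement into the pointwise identity $\psi\equiv\frac1l$.
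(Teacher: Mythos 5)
Your proof is correct, but it takes a genuinely different route from the paper's at the decisive step. Both arguments share the same skeleton: reduce to the quotient circle $\bS^1/\varphi$, where the factorized system is of type~(\ref{i:generic}) and has a unique, non-atomic stationary measure; observe that any stationary measure upstairs projects to it and is therefore non-atomic; and reduce uniqueness to showing that every stationary measure upstairs is $\varphi$-invariant, hence equal to $\frac1l\tilde\mu_+$. Where you diverge is in how $\varphi$-invariance is obtained. The paper works with an \emph{ergodic} component $\mu$, invokes Kakutani's random ergodic theorem to say that random orbits of $\mu$-generic points equidistribute w.r.t.\ $\mu$ (and of $\varphi(x)$ w.r.t.\ $\varphi_*\mu$), and then uses a positive-probability synchronization event — the random repeller of the factor avoiding the projected arc $[y,\varphi(x)]$ — to force $\mu=\varphi_*\mu$, finishing by ergodic decomposition. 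You instead treat an arbitrary stationary $\mu$ directly: lifting the random repeller of the factor (Addendum~\ref{p:12}) to the fibre, you apply a stationarity-plus-strict-convexity (variance) identity to the masses of the $l$ arcs it cuts out, conclude $\mu(f_i^{-1}J)=\mu(J)$ for a.e.\ such arc, and then use full support of $\mu_-$, non-atomicity, and minimality to get $\mu([x,\varphi(x)])\equiv\frac1l$, i.e.\ $\varphi$-invariance. Your route avoids Kakutani's theorem and the ergodic decomposition altogether and is very much in the spirit of the martingale computation in the proof of Theorem~\ref{t:contraction} (stationarity plus convexity forces an a.s.\ identity), at the cost of the slightly more delicate bookkeeping with the equivariant arc family; the paper's route is softer, leaning on standard ergodic-theoretic machinery. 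Two small points you should make explicit: take $\nu$ with $\nu(i)>0$ for every $i$ (e.g.\ uniform), since the vanishing of the variance only controls indices in the support of $\nu$; and note that non-atomicity of $\hat\mu_+$ and full support of $\hat\mu_-$ for the factor follow from minimality of the factorized semigroup and of its inverse, exactly as in the paper's proof of Theorem~\ref{t:contraction}.
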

\begin{proof}
Let $\mu_+$ be the stationary measure for the dynamics on the quotient circle $\bS^1/\varphi$, and $\tilde\mu_+$ be the measure obtained by its lift on the initial circle: for any $I\subset \bS^1$ such that the projection $\pi:\bS^1\to \bS^1/\varphi$ is injective on $I$, we have $\tilde \mu_+(I)=\mu_+(I)$. It is easy to see then that $\frac{1}{l} \tilde \mu_+$ is a stationary measure for the initial dynamics (the normalization constant $\frac{1}{l}$ is needed for this measure to be a probability measure). Let us show that this is the unique stationary measure.

Indeed, any stationary measure for the non-factorized dynamics  is mapped by the projection $\pi$ to a stationary measure for the factorized dynamics, and thus (as the latter is unique) to the measure~$\mu_+$. Now, take any ergodic component $\mu$ of the measure~$\frac{1}{l} \tilde \mu_+$.

Recall that due to the Kakutani Random Ergodic Theorem~\cite{Kakutani} (see also~\cite{Furman}), for any ergodic stationary measure the random trajectory of almost every point is distributed with respect to this measure: for $\mu$-a.e. $x\in\bS^1$ and $\overline{\nu}$-a.e. $\omega\in\{1,\dots,k\}^{\N}$ one has
$
\frac{1}{N} \sum_{n=1}^{N} \delta_{F[n,\omega](x)} \to \mu.
$

Notice now that if measure $\mu$ is ergodic, the same holds for the ``shifted'' measure $\varphi_* \mu$. On the other hand, for any two points $x,y\in \bS^1$ such that the three points $x,y,\varphi(x)$ are in this cyclic order, the probability that the iterations of $y$ and $\varphi(x)$ approach each other is positive. Indeed, the arc $[y,\varphi(x)]$ projects on the arc $\pi([y,\varphi(x)])$ on the quotient circle that does not coincide with all the circle. Hence the probability that $r(\omega)\not\in \pi([y,\varphi(x)])$ is positive, thus implying positive probability that the diameter of iterations of the projected arc tends to zero. Hence, the same holds for the diameter of the iterations of the initial arc.

If $\mu$ is an ergodic stationary measure, it is non-atomic as it projects to the non-atomic stationary measure $\mu_+$ on the circle.  There are two intervals $I_1,I_2$ of positive $\mu$-measure on the circle, such that $I_2$ lies between $I_1$ and $\varphi(I_1)$. Indeed, just take any small interval of positive $\mu$-measure and divide it into two subintervals of positive measure.  For a $\mu$-generic $x\in I_1$, $y\in I_2$ for $\overline{\nu}$-a.e. $\omega$ we have
$$
\frac{1}{N}\sum_{n=1}^N \delta_{F[n,\omega](y)} \rightarrow \mu,
$$
$$
\frac{1}{N}\sum_{n=1}^N \delta_{F[n,\omega](\varphi(x))} \rightarrow \varphi_*\mu,
$$
and with positive probability the iterations of $\varphi(x)$ and $y$ approach each other. Hence, $\mu=\varphi_*\mu$. Thus, the only ergodic stationary measure for the initial system is the $\varphi$-invariant measure~$\frac{1}{l} \tilde \mu_+$. As any stationary measure can be decomposed into ergodic components, it is the only stationary measure of the system.
\end{proof}

This unique ergodicity by the standard argument implies that the time-averages of distributions of random images of any point $p\in\bS^1$ converge to $\frac{1}{l} \tilde \mu_+$, and the convergence is uniform in $p$ (otherwise we would be able to extract a subsequence that converges to a stationary measure different from $\frac{1}{l} \tilde \mu_+$).

Now let us fix a large arbitrary $m\in\N$. Randomly chosen length $n$ between $1$ and $N>>m$ of the composition can be decomposed as $n=n'+n''$, where $n''=[n/m]\cdot m$, $n'=n-n''$ is the remainder of division of $n$ by~$m$. If $N$ and $m$ are large and $N$ is much larger than $m$ then the probability that $n''$ takes maximal possible value, i.e. $n''=[N/m]\cdot m$, is small. Then conditionally to any value of $n''$ other than the maximal one, the remainder $n'$ is uniformly distributed among $0,1,\dots,m-1$.

Let us lift the construction used in the proof for the type~(\ref{i:generic}) situation to our $l$-leaves cover. Namely, we have seen in the proof for that case that decomposing $n=n'+n''$ and considering separately the maps associated to subwords $w'$ and $w''$ formed by first $n'$ and last $n''$ letters respectively, with large probability we can find independent couples of intervals $(\mA',\mR')$ and $(\mA'',\mR'')$. With probability that tends to~$1$ as $m$ and $N$ tend to infinity these intervals are small and satisfy the disjointness conditions $\mR'\cap\mA''=\emptyset$, $\mR''\cap\mA'=\emptyset$.

Lifting this to our cover, we see that $(\mA',\mR')$ becomes a $2l$-tuple of intervals $(\mA'_1,\dots,\mA'_l,\mR'_1,\dots,\mR'_l)$, where $\mA'_j=\varphi^{j-1}(\mA'_1)$, $\mR'_j=\varphi^{j-1}(\mR'_1)$, similarly for $\mA''_j, \mR''_j, \quad j=1,\dots, l$. Denote also by $I_j$ the connected components of complement to the union of $\mR_j''$, numbering them so that $I_j$ lies between $\mR_j''$ and $\mR_{j+1}''$, and choose the beginning of numbering of $\{\mA_j''\}$ so that $F[n'',w''](I_j'')\subset\mA_j''$.

We know that with probability close to~$1$ the disjointness condition is satisfied. In this case
$$
F[n',w'](F[n'',w''](I_1))\subset F[n',w'](\mA_1'')\subset \bigcup_j \mA_j'\subset \bigcup_j I_j,
$$
and the rotation number of the composition $F[n,w]=F[n',w']\circ F[n'',w'']$ is defined by the index $r$ such that $F[n,w](I_1)\subset I_r$, namely, it is equal to~$\frac{r-1}{l}$.

For any $n''<[N/m]\cdot m$ (the probability of this tends to~$1$ as $N\to\infty$), and any word $w''$ of length $n''$, fix a point $p\in \mA_1''$ (provided that~$\mA_1''$ is defined). Note that its image $F[n',w'](p)$ will belong to $I_r$ (provided that the disjointness condition is satisfied). On the other hand, distribution of such random images of random length $n'$, equidistributed among $\{0,\dots,m-1\}$, is $\frac{1}{m}\sum_{n'=0}^{m-1} T_{\nu}^{n'} \delta_p$. As we've discussed earlier, it converges as $m\to\infty$ to the $\varphi$-invariant stationary measure~$\frac{1}{l} \tilde \mu_+$.

In particular, as $m\to\infty$, the events $F[n',w'](p)\in I_j=\varphi^{j-1}(I_1)$ become more and more equiprobable. Averaging this among the choices of~$w''$, we see that as $N\to\infty$, the rotation number of $F[n,w]$ tends to be equidistributed on $\{0,\frac{1}{l},\dots, \frac{l-1}{l}\}$.

\end{proof}

\end{document}